\newtheorem{thm}{Theorem}[section]
\newtheorem{lem}{Lemma}[section]
\numberwithin{equation}{section}
\numberwithin{equation}{section} \numberwithin{figure}{section}
\numberwithin{table}{section}
\newtheorem{rem}{Remark}[section]
\newtheorem{scheme}{Scheme}[section]
\def\bu{\mathbf{u}}
\def\bu{\mathbf{v}}
\def\bx{\mathbf{x}}
\def\bD{\mathbf{D}}
\def\bv{\mathbf{v}}
\def\bB{\mathbf{B}}
\newcommand{\ben}{\begin{eqnarray}}
\newcommand{\een}{\end{eqnarray}}
\newcommand{\beq}{\begin{equation}}
\newcommand{\eeq}{\end{equation}}
\newcommand{\bea}{\begin{array}}
\newcommand{\eea}{\end{array}}
\newcommand{\bef}{\begin{figure}[t]}
\newcommand{\eef}{\end{figure}}
\newcommand{\benl}{\begin{eqnarray*}}
\newcommand{\eenl}{\end{eqnarray*}}
\newcommand{\be}{\begin{equation}}
\newcommand{\ee}{\end{equation}}
\newcommand{\bse}{\begin{subequations}}
\newcommand{\ese}{\end{subequations}}
\def\bA{\mathbf{A}}
\def\cL{\mathcal{L}}
\def\cG{\mathcal{G}}
\def\bx{\mathbf{x}}
\def\cL{\mathcal{L}}
\def\cG{\mathcal{G}}
\begin{document}

\title{Arbitrarily High-order Unconditionally Energy Stable Schemes for Gradient Flow Models Using the Scalar Auxiliary Variable Approach}
\author{
Yuezheng Gong \footnote{College of Science, Nanjing University of Aeronautics and Astronautics, Nanjing 210016, China; Email: gongyuezheng@nuaa.edu.cn.}
, Jia Zhao \footnote{Department of Mathematics \& Statistics, Utah State University, Logan, UT, USA; email: jia.zhao@usu.edu.}
and Qi Wang \footnote{Department of Mathematics, University of South Carolina, Columbia, SC, 29208, USA; email: qwang@math.sc.edu.}}
\date{}
\maketitle

\begin{abstract}
In this paper, we propose a novel family of high-order numerical schemes for the gradient flow models based on the scalar auxiliary variable (SAV) approach, which is named the high-order scalar auxiliary variable (HSAV) method. The newly proposed schemes could be shown to reach arbitrarily high order in time while preserving the energy dissipation law without any restriction on the time step size (i.e., unconditionally energy stable). The HSAV strategy is rather general that it does not depend on the specific expression of the effective free energy, such that it applies to a class of thermodynamically consistent gradient flow models arriving at semi-discrete high-order energy-stable schemes. We then employ the Fourier pseudospectral method for spatial discretization. The fully discrete schemes are also shown to be unconditionally energy stable. Furthermore, we present several numerical experiments on several widely-used gradient flow models, to demonstrate the accuracy, efficiency and unconditionally energy stability of the HSAV schemes. The numerical results verify that the HSAV schemes can reach the expected order of accuracy, and it allows a much larger time step size to reach the same accuracy than the standard SAV schemes.
\end{abstract}


\section{Introduction}
The dynamics of many dissipative systems could be driven by an effective free energy that is decreasing with time, where the decreasing path is controlled by a certain dissipation mechanism \cite{OnsagerL1,OnsagerL2,ZhaoYangGongZhaoYangLiWang2018,YangLiForestWang2016}. To study such dynamics in the macroscopic level, a gradient flow model is usually used. In general, consider the state variables $\Phi(\bx,t)$ for a dissipative system on the domain $\Omega$. The evolution (kinetic) equation for $\Phi(\bx,t)$ could be formulated as \cite{SAV-1}
\beq \label{eq:Intro_gradient}
\partial_t \Phi(\bx,t) = \cG \frac{\delta F}{\delta \Phi},
\eeq
where $\cG$ is a negative semi-definite differential or integral operator which might depend on the state variables $\Phi$. Here $F$ is the effective free energy, and $\frac{\delta F}{\delta \Phi}$ is the variational derivative of $F$ with respect to the state variable $\Phi$, called the chemical potential. For instance, if $F=F(\Phi, \nabla \Phi)$, the chemical potential would be $\frac{\delta F}{\delta \Phi} = \frac{\partial F}{\partial \Phi} - \nabla \cdot \Big( \frac{\partial F}{\partial \nabla \Phi} \Big)$. In this paper, we consider periodic boundary conditions for simplicity of notations. 

The system \eqref{eq:Intro_gradient} is a general gradient flow model, which has an intrinsic energy dissipation law. Actually, if we take inner product of \eqref{eq:Intro_gradient} with the chemical potential $\frac{\delta F}{\delta \Phi}$, we have the energy dissipation law
\beq
\frac{d F}{d t} = \int_\Omega \Big( \frac{\delta F}{\delta \Phi} \Big)^T \cG \frac{\delta F}{\delta \Phi} d\bx \leq 0,
\eeq
thanks to the negative semi-definite property of $\cG$.

Note that the gradient flow model could be specified once the triple $(\Phi,\cG,F)$ is given. It turns out many dissipative PDE models could be classified as a special case of the general gradient flow model in \eqref{eq:Intro_gradient}. For instance, if we specify the state variable as $\phi$, the mobility as $\cG = -M$ (with $M$ a positive constant) and the free energy as $F=\int_\Omega \big[\frac{1}{2}|\nabla \phi|^2 + \frac{1}{4\varepsilon^2}(\phi^2-1)^2\big]d \bx$ (with $\varepsilon$ a free parameter), the general gradient flow model \eqref{eq:Intro_gradient} reduces to the well-known Allen-Cahn equation \cite{AC}
\beq
\partial_t \phi = -M \Big( -\Delta \phi + \frac{1}{\varepsilon^2}(\phi^3-\phi)\Big).
\eeq
If we use the same free energy and specify the mobility as $\cG= M\Delta$, we end up with the well-known Cahn-Hilliard equation \cite{CahnH1958}
\beq
\partial_t \phi = M \Delta \Big( -\Delta \phi + \frac{1}{\varepsilon^2}(\phi^3-\phi)\Big).
\eeq
Moreover, there are many more examples which could be cast as special cases of \eqref{eq:Intro_gradient}, including molecular beam epitaxy (MBE) growth models  \cite{WangWangWiseDCDS2010,ChenZhaoYang2018}, phase field crystal models \cite{Gomez-crystal}, dendritic crystal growth models \cite{WangPhysicaD1993}, multiphase models \cite{Boy2006,KimJCICP2012}.  For a detailed discussion,  readers can refer to  \cite{ZhaoYangGongZhaoYangLiWang2018} and the references therein.

Along with the broad applications of gradient flow models, many accurate, efficient and stable numerical schemes are developed for longtime dynamic simulations of the dissipative systems. As the dynamics follow a specified trace of dissipating the effective free energy, one essential indication of stable numerical schemes is to preserve the energy dissipation law at the discrete level. A numerical scheme that possesses such property is known as energy stable. If such numerical stability does not have any restrictions on the time step size, it is then usually named unconditionally energy stable. In practice, energy dissipation preserving schemes are always desirable as they mimic the physical structures of the original problem and thus perform excellent numerical stability even with large marching time step size. Due to its practical significance, a large number of innovative work have been developed, please see \cite{Eyre1998Unconditionally,WangWangWiseDCDS2010,WangWiseSINUMA2011,Shen10_1,RomeroNME2009,GuillienTierraJCP2013,BadalassiCenicerosBanerjeeJCP2003,LiKimCNSNS2017,LaiEAJAM2013,JuLiQiaoZhangMC2017,ChenCondeWangWangWiseJSC2012,GuoLinLowengrubWise2017,YangZhaoHeJCAM2018,YangZhaoCICP2018} and the references therein.  
However, most of these existing schemes have strict restrictions on $\cG$ and $F$, i.e., they only work for particular gradient flow models. Recently, Yang et al. \cite{YangZhaoWangShenM3AS2017, ZhaoYangGongZhaoYangLiWang2018, GongZhaoWangSISC2, GongZhaoYangWangSISC,HanBrylevYangTanJSC2017} propose an energy quadratization (EQ) approach to bypass the restrictions and obtain linear energy stable schemes, which can be applied for almost all gradient flow problems. Shen et al. \cite{SAV-1, SAV-2} further extend the EQ idea to develop the scalar auxiliary variable (SAV) approach, where the resulting linear schemes can be solved quickly by the fast Fourier transform (FFT). The primary idea of both EQ and SAV is to introduce some auxiliary variables to reformulate a gradient flow model into an equivalent form, such that the effective free energy is a quadratic functional in the reformulated equivalent system. Then linear and unconditionally energy stable schemes could be constructed easily for the reformulated system, which in turn solves the original gradient flow model. Due to the generality of the EQ and SAV approaches, they have been applied for many existing gradient flow models \cite{YangZhaoHeJCAM2018, YangZhaoCICP2018, YangZhaoWangShenM3AS2017, SAV-1, SAV-2, ChenZhaoYang2018,XuYIEQ}.

However, most of the existing energy stable schemes are up to second-order accurate in time. There is little work on developing higher order energy stable schemes.  Since gradient flow models usually require longtime dynamic simulations to reach the steady state, high order accurate energy stable schemes are always desirable, which makes large marching steps practical while preserving the accuracy. Several seminal works on developing high-order energy-stable numerical schemes include \cite{RKJCP2017, GongZhaoAML2019}. In this paper, we take advantage of the SAV idea \cite{SAV-1, SAV-2} to develop arbitrarily high-order energy-stable numerical schemes for gradient flow models in two steps: firstly, we utilize the SAV technique to transform the gradient glow model into an equivalent form. The transformed equivalent system turns out to have a quadratic free energy functional along with a modified energy dissipation law; secondly, we exploit the structure-preserving Gaussian collocation and RK methods to derive high-order scalar auxiliary variable (HSAV) schemes, which are proved rigorously to preserve the discrete modified energy dissipation law. Note that the newly proposed high-order schemes overcome all the drawbacks of the convex-splitting RK scheme in \cite{RKJCP2017}. For instance, by utilizing the SAV technique, our approach does not have any restrictions on specific forms of the mobility $\cG$ and effective free energy $F$, making it applicable to all the existing gradient flow models. Moreover, by employing the Gaussian collocation method, our approach can reach arbitrarily high-order accuracy in time with optimal RK stages.

The rest of this paper is organized as follows. In Section \ref{sec:model}, we will present the general gradient flow model and its equivalent reformulation based on the SAV approach. In Section \ref{sec:TimeDis}, we derive the high-order time discretization for the reformulated system and prove its unconditional energy stability. In Section \ref{sec:spaceDis}, we use the Fourier pseudo-spectral method for spatial discretization to arrive at fully discrete schemes, which are shown to be unconditionally energy stable as well. Several numerical examples are presented in Section \ref{sec:Numer}. In the end, we give the concluding remarks.

\section{Gradient Flow Models and Their SAV Reformulation} \label{sec:model}
In this section, we start from general gradient flow models and apply the SAV approach to derive an equivalent form, which has a quadratic energy functional and an energy dissipation law for the new system. We call this approach energy quadratization reformulation (or SAV reformulation). The EQ/SAV reformulation for the gradient flow models provides an elegant platform for developing arbitrarily high-order unconditionally energy stable schemes, which is the major focus of this paper.

\subsection{General gradient flow models}
Consider the material domain $\Omega$ with enough regularity on the boundary. The $L^2$ inner product and its norm are defined as $\forall f, g \in L^2(\Omega)$, $(f,g) = \int_\Omega f g d\bx$ and $\| f \|_2 = \sqrt{(f,f)}$, respectively. For simplicity, we consider a single state variable $\phi$. The dynamics of $\phi$ is driven by an effective free energy or Lyapunov function $F$ and a negative semi-definite mobility operator $\cG$. Thus the gradient flow model is formulated as
\beq \label{eq:gradeint_flow}
\partial_t \phi = \cG \frac{\delta F}{\delta \phi}.
\eeq

The generic form of the effective free energy $F$ could be written as
\beq \label{eq:gradient_F}
\displaystyle F = \frac{1}{2}(\cL \phi, \phi) + (g, 1),
\eeq
where $\cL$ is a linear, self-adjoint operator, and $g$ is a potential functional that might depend on $\phi$ and its low order spatial derivatives. For instance, given the widely used Ginzburg-Landau free energy functional for the two phase immersible materials
\beq
F = \int_\Omega \gamma \left[\frac{\varepsilon}{2}|\nabla \phi|^2 + \frac{1}{\varepsilon} \phi^2 (1-\phi)^2\right] d\bx,
\eeq
where $\gamma$ is the surface tension, and $\varepsilon$ is the interfacial thickness, we can cast
\beq
\cL= -\gamma \varepsilon \Delta + \gamma_0 , \quad g(\phi) = \frac{\gamma}{\varepsilon}\phi^2(1-\phi)^2 - \frac{\gamma_0}{2} \phi^2,
\eeq
where $\gamma_0$ is a non-negative constant (stabilization parameter \cite{ChenZhaoYang2018}), by assuming periodic boundary conditions or other proper boundary conditions such that the boundary integral terms are canceled out.

With the specific form of effective free energy F in \eqref{eq:gradient_F}, the gradient flow model \eqref{eq:gradeint_flow} could be rewritten as
\beq \label{eq:gradient_flow}
\partial_t \phi = \cG \Big( \cL\phi + \frac{\delta g}{\delta \phi} \Big).
\eeq
Here we note that $\cL$ is self-adjoint and $\cG$ is negative semi-definite, i.e., under proper boundary conditions they satisfy
\beq
(\cL\phi,\psi) = (\phi,\cL\psi), \quad (\psi,\cG\psi)\leq 0,\quad \forall \phi,\psi \in L^2(\Omega).
\eeq
Therefore, the gradient flow system \eqref{eq:gradient_flow} satisfies the following energy dissipation law
\beq \label{eq:energy_dissipation}
\frac{dF}{dt} = \Big(\cL\phi + \frac{\delta g}{\delta \phi}, \partial_t \phi \Big) = \Big( \cL\phi + \frac{\delta g}{\delta \phi}, \cG (\cL\phi + \frac{\delta g}{\delta \phi}) \Big) \leq 0.
\eeq

\subsection{Model reformulation using the SAV approach}
For simplicity of notations, we assume $g$ only depends on $\phi$, but not its spatial derivatives. But we note the SAV approach works for a more general $g$. We first introduce a scalar auxiliary variable
\beq \label{eq:SAV_q}
q(t) = \sqrt{\big(g(\phi),1\big)+C_0},
\eeq
where $C_0$ is a positive number such that $\big(g(\phi),1\big) + C_0>0$. Then we can reformulate the original gradient flow system \eqref{eq:gradient_flow} into the following equivalent PDEs
\beq \label{eq:gradient_flow_SAV}
\begin{cases}
\partial_t \phi = \cG \Big( \cL \phi + \frac{q g'(\phi) }{\sqrt{\big(g(\phi),1\big)+C_0}} \Big), \\
\partial_t q = \left(\frac{g'(\phi)}{2 \sqrt{\big(g(\phi),1\big)+C_0}}, \,\,\, \partial_t \phi \right),
\end{cases}
\eeq
with the consistent initial condition
\beq \label{eq:gradient_flow_SAV_initial}
q|_{t=0} =\sqrt{\big(g(\phi|_{t=0}),1\big)+C_0}.
\eeq
It is obvious that the new system \eqref{eq:gradient_flow_SAV} along with the consistent initial condition \eqref{eq:gradient_flow_SAV_initial} is equivalent to the original gradient flow system \eqref{eq:gradient_flow}. So in our latter discussion, we are going to design arbitrarily high-order numerical approximations for the equivalent model \eqref{eq:gradient_flow_SAV}, which in turn solves the original gradient flow problem \eqref{eq:gradient_flow}.

In the reformulated system \eqref{eq:gradient_flow_SAV}, the modified free energy could be defined as
\beq
E = \frac{1}{2}(\cL\phi, \phi) + q^2 - C_0,
\eeq
which is equal to the free energy $F$ of the original system \eqref{eq:gradient_flow} in the continuous level, by noticing \eqref{eq:SAV_q}. As we have been emphasizing, the free energy of the equivalent system \eqref{eq:gradient_flow_SAV} is a quadratic functional with respect to the new variables.
And the new system \eqref{eq:gradient_flow_SAV} satisfies the modified energy dissipation law
\ben \label{eq:SAV_energy_dissipation}
\frac{d E}{d t} = (\cL\phi, \phi_t) + 2q q_t = \left(\cL\phi  + \frac{q g'(\phi)}{\sqrt{\big(g(\phi),1\big)+C_0}} , \phi_t \right)\nonumber \\
= \left( \cL\phi  + \frac{q g'(\phi)}{\sqrt{\big(g(\phi),1\big)+C_0}} , \cG \Big(\cL\phi  + \frac{q g'(\phi)}{\sqrt{\big(g(\phi),1\big)+C_0}} \Big)\right) \leq 0.
\een

Next we will focus on the SAV reformulated system \eqref{eq:gradient_flow_SAV}-\eqref{eq:gradient_flow_SAV_initial} to develop arbitrarily high-order unconditionally energy stable numerical approximations, which in turn solve \eqref{eq:gradient_flow}.

\section{High Order Time Discretization}\label{sec:TimeDis}
In this section, we first derive the RK method and the collocation method in time for the SAV reformulated system \eqref{eq:gradient_flow_SAV}, respectively. Then both a class of RK methods and the collocation methods with the Gaussian quadrature nodes are proved to preserve the corresponding energy dissipation law and thus unconditionally energy stable. Note that the proposed methods can reach arbitrarily high order while preserving the modified energy dissipation law.

Applying an $s$-stage RK method to solve the system  \eqref{eq:gradient_flow_SAV}, we obtain the following HSAV-RK scheme.

\begin{scheme}[$s$-stage HSAV-RK Method] \label{scheme:RK-method}
Let $b_i$, $a_{ij}$ ($i,j = 1,\cdots,s$) be real numbers and let $c_i = \sum\limits_{j=1}^s a_{ij}$.
For given $(\phi^n, q^n)$, the following intermediate values are first calculated by
\beq
\bea{l}
\Phi_i =  \phi^n +  \Delta t \sum\limits_{j=1}^s a_{ij} k_j, \\
Q_i = q^n +  \Delta t \sum\limits_{j=1}^s a_{ij} l_j, \\
k_i = \cG \left( \cL \Phi_i + \frac{Q_i g'(\Phi_i) }{\sqrt{\big(g(\Phi_i),1\big)+C_0}} \right)  \\
l_i = \left(\frac{g'(\Phi_i)}{2 \sqrt{\big(g(\Phi_i),1\big)+C_0}}, k_i \right).
\eea
\eeq
Then $(\phi^{n+1}, q^{n+1})$ is updated via
\beq
\bea{l}
\phi^{n+1} = \phi^n +  \Delta t \sum\limits_{i=1}^s b_i k_i , \\
q^{n+1} = q^n +  \Delta t \sum\limits_{i=1}^s b_i l_i.
\eea
\eeq
\end{scheme}
\noindent The RK coefficients are usually displayed by a Butcher table
\[
\begin{array}
{c|c}
\mathbf{c}  \mathbf{A} \\
\hline
  \mathbf{b}^T\\
\end{array}
=
\begin{array}
{c|ccc}
c_1  a_{11}  \cdots  a_{1s}  \\
\vdots  \vdots    \vdots \\
c_s  a_{s1}  \cdots  a_{ss} \\
\hline
 b_1   \cdots  b_s \\
\end{array},
\]
where $\mathbf{A} \in \mathbb{R}^{s,s}$, $\mathbf{b} \in \mathbb{R}^s$, and $\mathbf{c}=\mathbf{A} \mathbf{l}$ with $\mathbf{l}=(1,1,\cdots,1)^T \in \mathbb{R}^s$.

For general HSAV-RK methods, we have the following energy-stability theorem.
\begin{thm} \label{thm:High-SAV-RK}
If the coefficients of an HSAV-RK method satisfy
\beq\label{eq:RK-stable-condition}
b_i a_{i j} + b_j a_{j i} = b_i b_j,\quad b_i\geq 0,\quad \forall~i,j = 1,\cdots,s,
\eeq
then it is unconditionally energy stable, i.e., it satisfies the following energy dissipation law
\beq\label{eq:High-EL-RK}
E^{n+1}-E^n = \Delta t \sum_{i=1}^s b_i \Big(  \cL \Phi_i + \frac{ Q_i g'(\Phi_i )}{\sqrt{\big(g(\Phi_i ), 1\big)+C_0}} , \cG\Big[ \cL \Phi_i + \frac{ Q_i g'(\Phi_i )}{\sqrt{\big(g(\Phi_i ), 1\big)+C_0}} \Big] \Big)\leq 0,
\eeq
where $E^{n} = \frac{1}{2}(\cL \phi^n, \phi^n) + (q^n)^2 - C_0$.
\end{thm}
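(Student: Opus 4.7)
The plan is to adapt the classical energy argument for symplectic/conservative Runge--Kutta methods to the dissipative setting provided by the SAV reformulation. Recall $E^n = \tfrac{1}{2}(\cL\phi^n,\phi^n)+(q^n)^2-C_0$. Using the bilinearity of $(\cL\cdot,\cdot)$ (and self-adjointness of $\cL$) together with the update formulas $\phi^{n+1}=\phi^n+\Delta t\sum_i b_i k_i$ and $q^{n+1}=q^n+\Delta t\sum_i b_i l_i$, I would first expand
\[
E^{n+1}-E^n = \Delta t\sum_{i=1}^s b_i\bigl[(\cL\phi^n,k_i)+2q^n l_i\bigr]+\tfrac{(\Delta t)^2}{2}\sum_{i,j=1}^s b_i b_j\bigl[(\cL k_i,k_j)+2 l_i l_j\bigr].
\]

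Next I would rewrite $\phi^n=\Phi_i-\Delta t\sum_j a_{ij}k_j$ and $q^n=Q_i-\Delta t\sum_j a_{ij}l_j$ inside the linear-in-$\Delta t$ sum. This produces the clean ``stage-value'' contribution $\Delta t\sum_i b_i[(\cL\Phi_i,k_i)+2Q_i l_i]$ and generates a mixed $(\Delta t)^2$ term $-\Delta t^2\sum_{i,j}b_i a_{ij}[(\cL k_j,k_i)+2 l_i l_j]$. At this point the key algebraic step is to apply the hypothesis $b_i a_{ij}+b_j a_{ji}=b_i b_j$: symmetrizing the $(\Delta t)^2$ terms from the original expansion and from the substitution, the coefficient of each product $(\cL k_i,k_j)$ and $l_i l_j$ becomes $\tfrac{1}{2}(b_i b_j - b_i a_{ij} - b_j a_{ji})=0$. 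So all quadratic $(\Delta t)^2$ contributions vanish, and we are left with
\[
E^{n+1}-E^n = \Delta t\sum_{i=1}^s b_i\bigl[(\cL\Phi_i,k_i)+2Q_i l_i\bigr].
\]

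Now I would plug in the definition $l_i=\bigl(\tfrac{g'(\Phi_i)}{2\sqrt{(g(\Phi_i),1)+C_0}},k_i\bigr)$, which folds the second term into an inner product with $k_i$, giving
\[
E^{n+1}-E^n = \Delta t\sum_{i=1}^s b_i\Bigl(\cL\Phi_i+\tfrac{Q_i g'(\Phi_i)}{\sqrt{(g(\Phi_i),1)+C_0}},\,k_i\Bigr).
\]
Substituting the definition of $k_i=\cG\bigl(\cL\Phi_i+\tfrac{Q_i g'(\Phi_i)}{\sqrt{(g(\Phi_i),1)+C_0}}\bigr)$ yields exactly the claimed identity \eqref{eq:High-EL-RK}. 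The inequality then follows because $b_i\geq 0$ and $\cG$ is negative semi-definite, so each term in the sum is nonpositive.

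I expect the main obstacle to be the bookkeeping in the symmetrization step: there are two sources of $(\Delta t)^2$ terms (one intrinsic to the quadratic expansion of $E^{n+1}$ in the increments, one coming from replacing $\phi^n,q^n$ by the stage values $\Phi_i,Q_i$), and they must be combined and indices swapped so that the hypothesis $b_i a_{ij}+b_j a_{ji}=b_i b_j$ can be invoked symmetrically for both the $(\cL k_i,k_j)$ and the $l_i l_j$ pieces. Once that cancellation is executed carefully, the rest of the proof is essentially a substitution of the stage-equation definitions of $k_i$ and $l_i$.
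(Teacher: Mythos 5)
Your proposal is correct and follows essentially the same argument as the paper: expand the quadratic energy in the increments, substitute $\phi^n=\Phi_i-\Delta t\sum_j a_{ij}k_j$ (and $q^n=Q_i-\Delta t\sum_j a_{ij}l_j$), symmetrize the $(\Delta t)^2$ terms using the self-adjointness of $\cL$ and the condition $b_ia_{ij}+b_ja_{ji}=b_ib_j$ to cancel them, and then substitute the stage equations for $l_i$ and $k_i$. The only cosmetic difference is that the paper handles the $\phi$-part and the $q$-part separately before adding, whereas you carry them together.
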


\begin{proof}
Denoting $\phi^{n+1} = \phi^n + \Delta t \sum\limits_{i=1}^s b_i k_i$ and noticing that the operator $\cL$ is linear and self-adjoint, we have
\beq\label{energy-delta-phi-1}
\frac{1}{2}(\cL \phi^{n+1} , \phi^{n+1}) - \frac{1}{2}(\cL \phi^{n} , \phi^{n}) = \Delta t\sum\limits_{i=1}^s b_i(k_i,\cL \phi^{n}) +\frac{\Delta t ^2}{2}\sum\limits_{i,j=1}^s b_i b_j (k_i,\cL k_j).
\eeq
Applying $\phi^{n} = \Phi_i - \Delta t\sum\limits_{j=1}^s a_{ij} k_j$ to the right of \eqref{energy-delta-phi-1}, we can deduce
\beq\label{energy-delta-phi-2}
\frac{1}{2}(\cL \phi^{n+1} , \phi^{n+1}) - \frac{1}{2}(\cL \phi^{n} , \phi^{n}) = \Delta t\sum\limits_{i=1}^s b_i(k_i,\cL \Phi_i),
\eeq
where $\sum\limits_{i,j=1}^s b_i a_{ij} (k_i,\cL k_j) = \sum\limits_{i,j=1}^s b_j a_{j i} (k_i,\cL k_j)$ and $b_i a_{i j} + b_j a_{j i} = b_i b_j$ were used. Similarly, we have
\beq\label{energy-delta-q}
|q^{n+1}|^2 - |q^n|^2 = 2\Delta t\sum\limits_{i=1}^s b_i l_i Q_i = \Delta t \sum\limits_{i=1}^s b_i \Big( \frac{ Q_i g'(\Phi_i )}{\sqrt{\big(g(\Phi_i ), 1\big)+C_0}} , k_i \Big).
\eeq
Adding \eqref{energy-delta-phi-2} and \eqref{energy-delta-q} leads to
\beq\label{delta-energy}
E^{n+1} - E^n = \Delta t \sum\limits_{i=1}^s b_i \Big( \cL \Phi_i + \frac{ Q_i g'(\Phi_i )}{\sqrt{\big(g(\Phi_i ), 1\big)+C_0}} , k_i \Big).
\eeq
Replacing $k_i = \cG \left( \cL \Phi_i + \frac{Q_i g'(\Phi_i) }{\sqrt{\big(g(\Phi_i),1\big)+C_0}} \right)$ to \eqref{delta-energy}, we can arrive at \eqref{eq:High-EL-RK}. This completes the proof.
\end{proof}

Applying an $s$-stage collocation method for the system \eqref{eq:gradient_flow_SAV}, we obtain the following HSAV-Collocation scheme.
\begin{scheme} [$s$-stage HSAV Collocation Method] \label{eq:Collocation-Scheme}
Let $c_1,\cdots, c_s$ be distinct real numbers ($0 \leq c_i \leq 1$). For given $(\phi^n, q^n)$, the collocation polynomials $u(t)$ and $v(t)$ is two polynomials of degree $s$ satisfying
\ben
    u(t_n) = \phi^n, \quad     v(t_n) = q^n,  \\
    \partial_t u(t_n^i) = \cG\left( \cL u(t_n^i) + \frac{v(t_n^i) g'\big(u(t_n^i)\big)}{\sqrt{\Big(g\big(u(t_n^i)\big), 1\Big)+C_0}} \right), \\
    \partial_t v(t_n^i) = \left( \frac{g'\big(u(t_n^i)\big)}{2\sqrt{\Big(g\big(u(t_n^i)\big), 1\Big)+C_0}}, \partial_t u(t_n^i) \right),
\een
where $t_n^i = t_n + c_i \Delta t$ and $i=1,\cdots,s.$ And then the numerical solution is defined by $\phi^{n+1} = u(t_n+\Delta t)$ and $q^{n+1} =v(t_n+\Delta t)$.
\end{scheme}
Theorem 1.4 on page 31 of \cite{HairerBook} indicates that the collocation method yields a special RK method. If the collocation points $c_1,\cdots,c_s$ are chosen as the Gaussian quadrature nodes, i.e., the zeros of the $s$-th shifted Legendre polynomial $\frac{d^s}{dx^s} \Big(  x^s (x-1)^s \Big),$  Scheme \ref{eq:Collocation-Scheme} is called the Gaussian collocation method. Based on the Gaussian quadrature nodes, the interpolating quadrature formula has order $2s$, and the Gaussian collocation method shares the same order $2s$. For instance, the RK coefficients of fourth order and sixth order HSAV schemes are given explicitly below (see \cite{HairerBook} for coefficients of higher orders).
\begin{table}[H]
\centering
\begin{tabular}{c|cc}
$\frac{1}{2}- \frac{\sqrt{3}}{6}$  $\frac{1}{4}$  $\frac{1}{4}-\frac{\sqrt{3}}{6}$  \\
$\frac{1}{2}+ \frac{\sqrt{3}}{6}$  $\frac{1}{4}+\frac{\sqrt{3}}{6}$   $\frac{1}{4}$ \\
\hline
 $\frac{1}{2}$  $\frac{1}{2}$ \\
\end{tabular}
\hspace{0.5in}
\begin{tabular}{c|ccc}
$\frac{1}{2}- \frac{ \sqrt{15}}{10}$  $\frac{5}{36} $  $\frac{2}{9} - \frac{ \sqrt{15}}{15}$  $\frac{5}{36}-\frac{\sqrt{15}}{30}$  \\
$\frac{1}{2}$  $\frac{5}{36} +\frac{ \sqrt{15}}{24}$  $\frac{2}{9} $  $\frac{5}{36}-\frac{\sqrt{15}}{24}$ \\
$\frac{1}{2}+ \frac{ \sqrt{15}}{10}$  $\frac{5}{36} +\frac{\sqrt{15}}{30} $  $\frac{2}{9} + \frac{ \sqrt{15}}{15}$  $\frac{5}{36}$  \\
\hline
 $\frac{5}{18}$  $\frac{4}{9}$  $\frac{5}{18}$ \\
\end{tabular}
\caption{RK coefficients of Gaussian collocation methods of order 4 and 6.}\label{tab-4-6-GM}
\end{table}

For conservative systems with quadratic invariants, the Gaussian collocation methods have been proven to conserve the corresponding discrete quadratic invariants \cite{HairerBook}. Here we show that they are also unconditionally energy stable for dissipative systems with quadratic free energy.

\begin{thm} \label{thm:High-SAV}
The $s$-stage HSAV Gaussian collocation Scheme \ref{eq:Collocation-Scheme} is unconditionally energy stable, i.e., it satisfies the following energy dissipation law
\ben\label{eq:High-EL}
 E^{n+1}-E^n \nonumber\\
= \Delta t \sum_{i=1}^s b_i \Big( \cL u(t_n^i) + \frac{v(t_n^i) g'\big(u(t_n^i)\big)}{\sqrt{\Big(g\big(u(t_n^i)\big), 1\Big)+C_0}} , \cG\Big[\cL u(t_n^i) + \frac{v(t_n^i) g'\big(u(t_n^i)\big)}{\sqrt{\Big(g\big(u(t_n^i)\big), 1\Big)+C_0}}\Big] \Big)\nonumber\\
\leq  0,
\een
where $E^{n} = \frac{1}{2}(\cL \phi^n, \phi^n) + (q^n)^2 - C_0$ and $t_n^i = t_n + c_i \Delta t$, $c_i$ ($i=1,\cdots,s$) are the Gaussian quadrature nodes, $b_i\geq 0$ ($i=1,\cdots,s$) are the Gauss-Legendre quadrature weights, $u(t),v(t)$ are the collocation polynomial of the Gaussian collocation methods.
\end{thm}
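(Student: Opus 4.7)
The plan is to reduce this theorem to the previous one, Theorem \ref{thm:High-SAV-RK}, by exploiting the classical equivalence between collocation methods and a particular family of Runge--Kutta methods. Concretely, I would first invoke Theorem 1.4 on page 31 of \cite{HairerBook}, which states that the $s$-stage collocation method with distinct nodes $c_1,\dots,c_s$ is identical to the $s$-stage RK method whose coefficients are given by
\beq
a_{ij} = \int_0^{c_i} \ell_j(\tau)\, d\tau, \qquad b_i = \int_0^1 \ell_i(\tau)\, d\tau,
\eeq
where $\ell_1,\dots,\ell_s$ denote the Lagrange basis polynomials associated to $c_1,\dots,c_s$. Under this identification, the intermediate stage values $\Phi_i, Q_i, k_i, l_i$ appearing in Scheme \ref{scheme:RK-method} coincide with the collocation values $u(t_n^i), v(t_n^i), \partial_t u(t_n^i), \partial_t v(t_n^i)$.

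Next, I would verify that for the Gaussian choice of nodes, the two hypotheses of Theorem \ref{thm:High-SAV-RK} hold. Positivity of the weights $b_i$ is a classical property of Gauss--Legendre quadrature. The algebraic identity $b_i a_{ij} + b_j a_{ji} = b_i b_j$ for all $i,j$ is the well-known symplecticity/quadratic-invariant-preserving condition, and it is established in \cite{HairerBook} precisely for Gaussian collocation methods; the standard argument uses that the quadrature rule with weights $b_i$ and nodes $c_i$ is exact for polynomials of degree up to $2s-1$, and applies this exactness to the polynomial $\ell_i(\tau)\int_0^\tau \ell_j(\sigma)\,d\sigma + \ell_j(\tau)\int_0^\tau \ell_i(\sigma)\,d\sigma - (\int_0^1 \ell_i)\,(\int_0^1 \ell_j)\cdot \text{const}$, or more directly to $\tfrac{d}{d\tau}\bigl[P_i(\tau) P_j(\tau)\bigr]$ with $P_k(\tau) = \int_0^\tau \ell_k$.

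With both hypotheses in hand, Theorem \ref{thm:High-SAV-RK} applies directly and yields the dissipation identity in the RK form; after translating $\Phi_i \mapsto u(t_n^i)$, $Q_i \mapsto v(t_n^i)$ and $k_i \mapsto \partial_t u(t_n^i)$ via the collocation equivalence, this is exactly \eqref{eq:High-EL}. The non-positivity of the right-hand side follows from $b_i \geq 0$ together with the negative semi-definiteness of $\cG$, applied termwise.

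The main obstacle is really just the verification of $b_i a_{ij} + b_j a_{ji} = b_i b_j$ for Gaussian collocation; everything else is bookkeeping. Since this identity is a standard and well-documented consequence of the $(2s-1)$-exactness of the Gauss--Legendre quadrature rule, I would simply cite \cite{HairerBook} rather than reproduce the proof, and the theorem follows as an immediate corollary of Theorem \ref{thm:High-SAV-RK}.
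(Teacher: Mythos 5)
Your proof is correct, but it follows a genuinely different route from the paper's. You reduce the Gaussian collocation scheme to an RK method via the standard equivalence (the paper itself cites Theorem 1.4 of \cite{HairerBook} for this), verify that the Gauss coefficients satisfy $b_i\ge 0$ and the symplecticity condition $b_ia_{ij}+b_ja_{ji}=b_ib_j$ (your second verification polynomial, $\frac{d}{d\tau}[P_i(\tau)P_j(\tau)]=\ell_iP_j+\ell_jP_i$ of degree $2s-1$, is the clean way to do this; the first variant you sketch is garbled but unnecessary), and then invoke Theorem \ref{thm:High-SAV-RK}. The paper instead argues directly: it writes $E^{n+1}-E^n=\int_{t_n}^{t_{n+1}}\big[(\dot u,\cL u)+2\dot v v\big]\,dt$, observes that this integrand is itself a polynomial of degree $2s-1$ so the $s$-point Gauss rule integrates it exactly, and then substitutes the collocation conditions at the nodes. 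Both arguments ultimately rest on the $(2s-1)$-exactness of Gauss--Legendre quadrature, but they apply it to different polynomials. Your reduction is more economical given that Theorem \ref{thm:High-SAV-RK} is already established, and it makes explicit that Gaussian collocation is just a symplectic RK method; the paper's direct computation is self-contained, avoids having to verify the algebraic condition on the Butcher coefficients, and makes the role of the degree count $2s-1$ for the energy integrand transparent. Either way the final sign conclusion follows, as you say, from $b_i\ge 0$ and the negative semi-definiteness of $\cG$ applied termwise.
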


\begin{proof}
Noticing $\phi^n = u(t_n), q^n=v(t_n)$ and $\phi^{n+1} = u(t_{n+1}), q^{n+1}=v(t_{n+1})$, we have
\benl
E^{n+1}-E^n =   \frac{1}{2}(\cL \phi^{n+1}, \phi^{n+1}) - \frac{1}{2}(\cL \phi^n, \phi^n) + (q^{n+1})^2-(q^n)^2 \nonumber\\
= \frac{1}{2}\big( u(t_{n+1}), \cL u(t_{n+1}) \big) - \frac{1}{2}\big( u(t_{n}), \cL u(t_{n}) \big) + |v(t_{n+1})|^2 - |v(t_n)|^2 \nonumber\\
= \int_{t_n}^{t_{n+1}} \left[ \frac{1}{2}\frac{d}{d t} \big(u(t), \cL u(t)\big) + \frac{d}{dt}|v(t)|^2 \right] dt  \nonumber\\
=  \int_{t_n}^{t_{n+1}} \Big[ \big(\dot{u}(t),\cL u(t)\big) + 2\dot{v}(t) v(t) \Big] d t.
\eenl
The integrand $\Big(\dot{u}(t),\cL u(t)\Big)$ and $\dot{v}(t) v(t)$ are polynomial of degree $2s-1$, which is integrated without error by the $s$-stage Gaussian quadrature formula. It therefore follows from the collocation condition that
\benl
\int_{t_n}^{t_{n+1}} \Big[ \big(\dot{u}(t),\cL u(t)\big) + 2\dot{v}(t) v(t) \Big]d t  \\
=   \Delta t \sum_{i=1}^s b_i \Big[ \big(\dot{u}(t_n^i),\cL u(t_n^i)\big) + 2\dot{v}(t_n^i) v(t_n^i) \Big]\\
=   \Delta t \sum_{i=1}^s b_i \Big (\dot{u}(t_n^i), \cL u(t_n^i) + \frac{v(t_n^i) g'\big(u(t_n^i)\big)}{\sqrt{\Big(g\big(u(t_n^i)\big), 1\Big)+C_0}} \Big)\\
    = \Delta t \sum_{i=1}^s b_i \Big( \cL u(t_n^i) + \frac{v(t_n^i) g'\big(u(t_n^i)\big)}{\sqrt{\Big(g\big(u(t_n^i)\big), 1\Big)+C_0}} , \cG\Big[\cL u(t_n^i) + \frac{v(t_n^i) g'\big(u(t_n^i)\big)}{\sqrt{\Big(g\big(u(t_n^i)\big), 1\Big)+C_0}}\Big] \Big)\leq  0,
\eenl
which leads to \eqref{eq:High-EL}. This completes the proof.
\end{proof}

\begin{rem}
The proposed high-order energy stable schemes don't depend on the specific form of the mobility $\cG$ and the effective free energy $F$, i.e., they work for all the gradient flow models \eqref{eq:gradeint_flow}.
\end{rem}

\begin{rem}
At each time step, even though solving an HSAV scheme takes longer than solving the SAV scheme, much larger time step size could be used for the HSAV scheme than the SAV scheme to reach the same accuracy (due to the high-order accuracy of the HSAV scheme). Overall, for simulations reaching similar accuracy, the HSAV scheme will take less CPU time than the SAV scheme, making the HSAV scheme superior for long time dynamic simulations.
\end{rem}

\section{Spatial discretization}\label{sec:spaceDis}
To make the order of accuracy in space compatible with the arbitrarily high-order in time, we employ the Fourier pseudospectral method in space for Scheme \ref{scheme:RK-method} and Scheme \ref{eq:Collocation-Scheme} to arrive at fully discrete HSAV-RK methods and fully discrete HSAV collocation methods. Then the fully discrete HSAV-RK methods with \eqref{eq:RK-stable-condition} and the fully discrete HSAV Gaussian collocation methods can be proved similarly to preserve the corresponding energy dissipation law in the fully discrete level.

Firstly, we recall the two-dimensional Fourier pseudospectral method \cite{Chen2001Multi,Gong2018Linear}. To make the paper self-explanatory, we briefly reintroduce the following notations (see \cite{Gong2018Linear} for more details). Let $N_x,N_y$ be two positive even integers. The spatial domain $\Omega = [0,L_x]\times[0,L_y]$ is uniformly partitioned with mesh size $h_x =
L_{x}/N_{x},h_y = L_{y}/N_{y}$ and $$\Omega_{h} =
\left\{(x_{j},y_{k})|x_{j} = j h_x,y_{k} = kh_y,~0\leq j\leq
N_{x}-1,0\leq k\leq N_{y}-1\right\}.$$ Let $V_{h} = \big\{u|u=\{u_{j,k}|(x_{j},y_{k})\in \Omega_{h}\} \big\}$
be the space of grid functions on $\Omega_{h}$. For any two vector grid functions $\bu = (u_m),\bv = (v_m)$ $(u_m,v_m\in V_{h})$, define the discrete inner product and norm as follows
$$(\bu,\bv)_{h} = h_x h_y\sum\limits_m\sum\limits_{j = 0}^{N_{x}-1}\sum\limits_{k = 0}^{N_{y}-1}
(u_m)_{j,k}(v_m)_{j,k},~~\|\bu\|_{h} = \sqrt{(\bu,\bu)_{h}}.$$
We define
\begin{equation*}
S_{N} = \textrm{span}\{ X_{j}(x) Y_{k}(y), j =
0,1,\ldots,N_{x}-1; k = 0,1,\ldots,N_{y}-1\}
\end{equation*}
as the interpolation space, where $X_{j}(x)$ and $Y_{k}(y)$ are
trigonometric polynomials of degree $N_{x}/2$ and $N_{y}/2$, given
respectively by
\ben
X_{j}(x) = \frac{1}{N_{x}}\sum\limits_{m =
-N_{x}/2}^{N_{x}/2}{\frac{1}{a_{m}}e^{im\mu_{x}(x-x_{j})}},\\
Y_{k}(y) = \frac{1}{N_{y}}\sum\limits_{m =
-N_{y}/2}^{N_{y}/2}{\frac{1}{b_{m}}e^{im\mu_{y}(y-y_{k})}},
\een
where $$a_{m} =
\begin{cases}
1, |m|<N_{x}/2,\\
2, |m|=N_{x}/2,\\
\end{cases}
b_{m} =
\begin{cases}
1, |m|<N_{y}/2,\\
2, |m|=N_{y}/2,\\
\end{cases}
$$
and $\mu_{x} = 2\pi/L_{x}, \mu_{y} = 2\pi/L_{y}$. We define the interpolation operator $I_{N}:C(\Omega)\rightarrow
S_{N}$ as follows:
\begin{equation}\label{interpolation}
I_{N}u(x,y) = \sum\limits_{j = 0}^{N_{x}-1}\sum\limits_{k
= 0}^{N_{y}-1} u_{j,k} X_{j}(x) Y_{k}(y),
\end{equation}
where $u_{j,k} = u(x_{j},y_{k})$. The key of spatial Fourier pseudospectral discretization is to obtain derivative $\partial_{x}^{s_{1}}\partial_{y}^{s_{2}}I_{N}u(x,y)$ at
collocation points. Then, we differentiate \eqref{interpolation} and evaluate the resulting expressions at point $(x_{j},y_{k})$ as follows
\begin{equation*}
\partial_{x}^{s_{1}}\partial_{y}^{s_{2}}I_{N}u(x_{j},y_{k}) = \sum\limits_{m_1 =
0}^{N_{x}-1}\sum\limits_{m_2
= 0}^{N_{y}-1} u_{m_1,m_2} (\bD_{s_{1}}^{x})_{j,m_1}
(\bD_{s_{2}}^{y})_{k,m_2},
\end{equation*}
where $\bD_{s_{1}}^{x}$ and $\bD_{s_{2}}^{y}$ are $N_{x} \times N_{x}$ and $N_{y} \times N_{y}$ matrices, respectively, with elements given by
\begin{equation*}
(\bD_{s_{1}}^{x})_{j,m} = \frac{d^{s_{1}}X_{m}(x_{j})}{dx^{s_{1}}},~(\bD_{s_{2}}^{y})_{k,m}
=\frac{d^{s_{2}}Y_{m}(y_{k})}{dy^{s_{2}}}.
\end{equation*}
Define three operators
$\odot$, $\textcircled{x}$ and $\textcircled{y}$ as follows:
\begin{equation*}
(u\odot v)_{j,k} = u_{j,k} v_{j,k}, ~ (\bA \textcircled{x} u)_{j,k} =
\sum\limits_{m=0}^{N_{x}-1}\bA_{j,m}u_{m,k}, ~ (\bB \textcircled{y} u)_{j,k} =
\sum\limits_{m=0}^{N_{y}-1}\bB_{k,m}u_{j,m},
\end{equation*}
where $u,v\in V_h$. It is easy to show that these three operators possess the following properties:
\begin{equation*}
u\odot v = v\odot u, ~ \bA \textcircled{x} \bB \textcircled{y} u = \bB \textcircled{y} \bA \textcircled{x} u, ~ \bA \textcircled{a} \bB \textcircled{a} u = (\bA\bB) \textcircled{a} u, ~\textcircled{a} = \textcircled{x} ~\textrm{or}~ \textcircled{y}.
\end{equation*}
Then we have $$\partial_{x}^{s_{1}}\partial_{y}^{s_{2}}I_{N}u(x_{j},y_{k}) = (\bD_{s_{1}}^{x}\textcircled{x}\bD_{s_{2}}^{y}\textcircled{y}u)_{j,k}.$$

\begin{lem}[\cite{Gong2014Multi}]\label{lemFFT}
Denote
\begin{equation*}
{\bf \Lambda}_{\alpha,s} = \left\{\begin{array}{c}

\Big[i\mu_{\alpha} ph{diag}\Big(0 , 1 , \ldots , \frac{N_{\alpha}}{2}-1 , 0 , -\frac{N_{\alpha}}{2}+1 , \ldots, -1\Big)\Big]^s, ~ph{when}~s~ph{odd},\\
\Big[i\mu_{\alpha} ph{diag}\Big(0 , 1, \ldots, \frac{N_{\alpha}}{2}-1, \frac{N_{\alpha}}{2}, -\frac{N_{\alpha}}{2}+1, \ldots, -1\Big)\Big]^s, ~ph{when}~s~ph{even},
\end{array}\right.
\alpha = x~\textrm{or}~y,
\end{equation*}
we have
\begin{equation}\label{relationship-FFT}
\bD_{s}^{\alpha}=F_{N_{\alpha}}^{-1}{\bf \Lambda}_{\alpha,s}F_{N_{\alpha}},
\end{equation}
where $F_{N_{\alpha}}$ is the discrete Fourier transform, and $F_{N_{\alpha}}^{-1}$ is the discrete
inverse Fourier transform.
\end{lem}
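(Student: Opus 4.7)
The plan is to prove the identity entrywise by unpacking the definition of the interpolation basis functions $X_m$ (or $Y_m$), differentiating $s$ times, and then recognizing the resulting double sum as the composition $F_{N_\alpha}^{-1}\Lambda_{\alpha,s}F_{N_\alpha}$ of inverse DFT, diagonal multiplication, and forward DFT. I will work out the case $\alpha=x$; the $y$-case is identical.

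First, I substitute the expression for $X_m(x)$ and differentiate $s$ times under the sum to obtain
\begin{equation*}
(\bD_s^x)_{j,m}=\frac{d^s X_m(x_j)}{dx^s}=\frac{1}{N_x}\sum_{k=-N_x/2}^{N_x/2}\frac{1}{a_k}(ik\mu_x)^s\,e^{ik\mu_x(x_j-x_m)}.
\end{equation*}
Since $x_j-x_m=(j-m)h_x$ and $\mu_x h_x=2\pi/N_x$, the exponential reduces to $e^{i2\pi k(j-m)/N_x}$, a pure $N_x$-th root of unity. Thus the formula is already a discrete convolution of a multiplier $(ik\mu_x)^s$ against Fourier basis functions in the frequency index $k$.

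The delicate step is the treatment of the Nyquist modes $k=\pm N_x/2$, which are counted with the weight $1/a_k=1/2$. The phase factor $e^{\pm i\pi(j-m)}=(-1)^{j-m}$ is the same for both modes, while the multiplier satisfies $(i(-N_x/2)\mu_x)^s=(-1)^s(iN_x\mu_x/2)^s$. Hence for odd $s$ the two half-weighted Nyquist contributions cancel exactly, producing the zero entry at position $N_x/2$ in the odd branch of $\Lambda_{\alpha,s}$, whereas for even $s$ they add to a single full-weight contribution $(iN_x\mu_x/2)^s(-1)^{j-m}$, giving the nonzero Nyquist entry $(i\mu_x N_x/2)^s$ in the even branch.

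After this folding, I reindex the sum so that $k$ runs over $\{0,1,\ldots,N_x-1\}$, with negative frequencies $k=-1,\ldots,-N_x/2+1$ shifted to $k=N_x-1,\ldots,N_x/2+1$ (using $e^{i2\pi k (j-m)/N_x}$ periodicity). This reordering is exactly the FFT convention encoded in the definition of $\Lambda_{x,s}$. The expression becomes
\begin{equation*}
(\bD_s^x)_{j,m}=\sum_{k=0}^{N_x-1}\Big(\tfrac{1}{N_x}e^{i2\pi jk/N_x}\Big)\,(\Lambda_{x,s})_{k,k}\,\Big(e^{-i2\pi mk/N_x}\Big),
\end{equation*}
which is precisely the $(j,m)$-entry of $F_{N_x}^{-1}\Lambda_{x,s}F_{N_x}$ under the standard DFT/inverse-DFT conventions. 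I expect the only real obstacle to be the bookkeeping at the Nyquist index together with the shift between the symmetric indexing $\{-N_x/2,\ldots,N_x/2\}$ and the FFT indexing $\{0,\ldots,N_x-1\}$; once the parity case analysis at $k=\pm N_x/2$ is resolved, the remainder is a routine rearrangement of a double sum.
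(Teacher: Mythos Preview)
Your argument is correct: the entrywise expansion of $X_m^{(s)}(x_j)$, the parity analysis at the Nyquist index (where the half-weighted $k=\pm N_x/2$ contributions cancel for odd $s$ and merge for even $s$), and the reindexing to the FFT ordering together give exactly the claimed factorization. Note, however, that the paper does not supply its own proof of this lemma; it simply cites the result from \cite{Gong2014Multi}, so there is nothing in the paper to compare your approach against. Your direct computation is the standard derivation and is precisely what one expects the cited reference to contain.
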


\begin{lem}
For real matrix $\bA\in \mathbb{R}_{N_{ph{a}}\times N_{ph{a}}},ph{a} = x~\textrm{or}~y,$ and $u,v\in V_{h}$,
\begin{equation}\label{inner property}
(\bA\textcircled{a} u,v)_{h} = (u,\bA^{T}\textcircled{a}v)_{h}.
\end{equation}
\end{lem}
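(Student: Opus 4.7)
The plan is to prove the identity by direct computation from the definitions of the operators $\textcircled{x}, \textcircled{y}$ and of the discrete inner product $(\cdot,\cdot)_h$. Since the two cases $\textcircled{a}=\textcircled{x}$ and $\textcircled{a}=\textcircled{y}$ are entirely symmetric, I would carry out the argument for $\textcircled{x}$ in full and then note that $\textcircled{y}$ follows by the same manipulation with the roles of the $j$ and $k$ indices interchanged.

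For the $x$-case, I would first unfold the left-hand side as $(\bA\textcircled{x}u,v)_h = h_x h_y \sum_{j=0}^{N_x-1}\sum_{k=0}^{N_y-1}(\bA\textcircled{x}u)_{j,k}\,v_{j,k}$, then insert the definition $(\bA\textcircled{x}u)_{j,k}=\sum_{m=0}^{N_x-1}\bA_{j,m}u_{m,k}$ to obtain a triple finite sum in $j,k,m$. Because all sums are finite, I can freely interchange the order of summation and collect the factor depending on $u$ outside, yielding $h_x h_y \sum_{m,k} u_{m,k}\bigl(\sum_j \bA_{j,m}v_{j,k}\bigr)$. Using $\bA_{j,m}=(\bA^T)_{m,j}$, the inner sum is exactly $(\bA^T\textcircled{x}v)_{m,k}$, so after relabeling the dummy index $m$ as the first grid index, the expression becomes $(u,\bA^T\textcircled{x}v)_h$. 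The $y$-case is identical after replacing $\sum_m \bA_{j,m}u_{m,k}$ by $\sum_m \bA_{k,m}u_{j,m}$.

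Since the argument is purely an exchange of finite sums, no analytical subtlety arises; the only thing to be careful about is the bookkeeping of which grid index (the first or the second) the operator acts on, so that the transpose is taken on the correct axis. The reality of the entries of $\bA$ is what allows the adjoint to be the transpose rather than the conjugate transpose, which explains the hypothesis $\bA\in\mathbb{R}^{N_a\times N_a}$. Overall I expect the proof to occupy no more than a few lines once the notation is set up.
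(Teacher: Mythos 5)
Your proof is correct and complete: the paper states this lemma without proof, and your direct computation---unfolding the definitions of $\textcircled{a}$ and $(\cdot,\cdot)_h$, interchanging the finite sums, and identifying $\bA_{j,m}=(\bA^{T})_{m,j}$---is exactly the argument the paper implicitly relies on. Your remarks on the symmetry between the $x$- and $y$-cases and on why real entries make the adjoint the plain transpose are both apt.
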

Using identity \eqref{inner property}, anti-symmetry of $\bD_{2s-1}^{\textrm{a}}$ and symmetry of $\bD_{2s}^{\textrm{a}},$ $\forall\textrm{a}\in\{x,y\}, s\in\mathbb{Z}^+$, we obtain
\begin{equation*}
\Big(\bD_{2s-1}^{\textrm{a}}\textcircled{a}u,v\Big)_h = - \Big(u,\bD_{2s-1}^{\textrm{a}}\textcircled{a}v\Big)_h,\quad \Big(\bD_{2s}^{\textrm{a}}\textcircled{a}u,v\Big)_h = \Big(u,\bD_{2s}^{\textrm{a}}\textcircled{a}v\Big)_h,
\end{equation*}
which implies that the Fourier pseudospectral method preserves discrete integration-by-parts formulae. Here we note that the retention of discrete integration-by-parts formulae is the key to constructing the spatial structure-preserving algorithm because the properties of the operators $\cL$ and $\cG$ are defined by the integration-by-parts formulae. Therefore, we can apply the Fourier pseudospectral method to obtain the corresponding discrete self-adjoint operator $\cL_h$ and the negative semi-definite operator $\cG_h$, i.e., they satisfy
\beq
(\cL_h\phi,\psi)_h = (\phi,\cL_h\psi)_h, \quad (\psi,\cG_h\psi)_h\leq 0,\quad \forall \phi,\psi\in V_{h}.
\eeq

Applying the Fourier pseudospectral method for Scheme \ref{scheme:RK-method}, we obtain the following fully discrete scheme.

\begin{scheme}[Fully Discrete HSAV-RK Method] \label{scheme:FP-RK-method}
Let $b_i$, $a_{ij}$ ($i,j = 1,\cdots,s$) be real numbers and let $c_i = \sum\limits_{j=1}^s a_{ij}$.
For given $\phi^n\in V_{h}$ and $q^n$, the following intermediate values are first calculated by
\beq
\bea{l}
\Phi_i =  \phi^n +  \Delta t \sum\limits_{j=1}^s a_{ij} k_j, \\
Q_i = q^n +  \Delta t \sum\limits_{j=1}^s a_{ij} l_j, \\
k_i = \cG_h \left( \cL_h \Phi_i + \frac{Q_i g'(\Phi_i) }{\sqrt{\big(g(\Phi_i),1\big)_h+C_0}} \right)  \\
l_i = \left(\frac{g'(\Phi_i)}{2 \sqrt{\big(g(\Phi_i),1\big)_h+C_0}}, k_i \right)_h,
\eea
\eeq
where $\Phi_i, k_i\in V_{h}$. Then $\phi^{n+1}\in V_{h}$, $q^{n+1}$ is updated via
\beq
\bea{l}
\phi^{n+1} = \phi^n +  \Delta t \sum\limits_{i=1}^s b_i k_i , \\
q^{n+1} = q^n +  \Delta t \sum\limits_{i=1}^s b_i l_i.
\eea
\eeq
\end{scheme}

Applying the Fourier pseudospectral method for Scheme \ref{eq:Collocation-Scheme}, we obtain the following fully discrete scheme.
\begin{scheme} [Fully Discrete HSAV Collocation Method] \label{eq:Collocation-Scheme-FP}
Let $c_1,\cdots, c_s$ be distinct real numbers ($0 \leq c_i \leq 1$). For given $\phi^n\in V_{h}$ and $q^n$, $u(t)$ is a $N_x\times N_y$ matrix polynomial of degree $s$ and $v(t)$ is a polynomial of degree $s$ satisfying
\ben
    u(t_n) = \phi^n, \quad     v(t_n) = q^n,  \\
  \dot{u}(t_n^i) = \cG_h\left( \cL_h u(t_n^i) + \frac{v(t_n^i) g'\big(u(t_n^i)\big)}{\sqrt{\Big(g\big(u(t_n^i)\big), 1\Big)_h+C_0}} \right), \\
  \dot{v}(t_n^i) = \left( \frac{g'\big(u(t_n^i)\big)}{2\sqrt{\Big(g\big(u(t_n^i)\big), 1\Big)_h+C_0}}, \dot{u}(t_n^i) \right)_h,
\een
where $t_n^i = t_n + c_i \Delta t$ and $i=1,\cdots,s.$ And then the numerical solution is defined by $\phi^{n+1} = u(t_n+\Delta t)$ and $q^{n+1} =v(t_n+\Delta t)$.
\end{scheme}

Similarly, we have the following theorems.

\begin{thm} \label{thm:High-SAV-RK-FD}
If the coefficients of a fully discrete HSAV-RK method satisfy
\beq\label{eq:RK-stable-condition-FD}
b_i a_{i j} + b_j a_{j i} = b_i b_j,\quad b_i\geq 0,\quad \forall~i,j = 1,\cdots,s,
\eeq
then it is unconditionally energy stable, i.e., it satisfies the following energy dissipation law
\beq\label{eq:High-EL-RK-FD}
E_h^{n+1}-E_h^n = \Delta t \sum_{i=1}^s b_i \Big(  \cL_h \Phi_i + \frac{ Q_i g'(\Phi_i )}{\sqrt{\big(g(\Phi_i ), 1\big)_h+C_0}} , \cG_h\Big[ \cL_h \Phi_i + \frac{ Q_i g'(\Phi_i )}{\sqrt{\big(g(\Phi_i ), 1\big)_h+C_0}} \Big] \Big)_h\leq 0,
\eeq
where $E_h^{n} = \frac{1}{2}(\cL_h \phi^n, \phi^n)_h + (q^n)^2 - C_0$.
\end{thm}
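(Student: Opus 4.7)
The plan is to mirror the proof of Theorem \ref{thm:High-SAV-RK} step by step, replacing the continuous $L^2$ inner product $(\cdot,\cdot)$ throughout by the discrete inner product $(\cdot,\cdot)_h$, and the continuous operators $\cL,\cG$ by their Fourier pseudospectral counterparts $\cL_h,\cG_h$. The only structural ingredients needed from the preceding spatial-discretization section are the two discrete identities already established there: $(\cL_h\phi,\psi)_h=(\phi,\cL_h\psi)_h$ and $(\psi,\cG_h\psi)_h\leq 0$ for all $\phi,\psi\in V_h$. Once these are in hand, no further properties of $\cL_h$ or $\cG_h$ are invoked, so the argument is purely algebraic in the discrete setting.

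First I would expand $\tfrac{1}{2}(\cL_h\phi^{n+1},\phi^{n+1})_h-\tfrac{1}{2}(\cL_h\phi^n,\phi^n)_h$ using $\phi^{n+1}=\phi^n+\Delta t\sum_i b_i k_i$ and discrete self-adjointness of $\cL_h$, obtaining a linear-in-$k_i$ term together with a quadratic $\Delta t^2$ term, exactly as in \eqref{energy-delta-phi-1}. Substituting $\phi^n=\Phi_i-\Delta t\sum_j a_{ij}k_j$ into the linear term and exploiting the symmetry condition $b_i a_{ij}+b_j a_{ji}=b_i b_j$ collapses the quadratic term, producing the clean expression $\Delta t\sum_i b_i(k_i,\cL_h\Phi_i)_h$ as in \eqref{energy-delta-phi-2}. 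Next I would treat $|q^{n+1}|^2-|q^n|^2$ analogously: using $q^{n+1}=q^n+\Delta t\sum_i b_i l_i$ and the same symmetry condition reduces it to $2\Delta t\sum_i b_i l_i Q_i$, and then the defining relation for $l_i$ (with the discrete inner product now) rewrites it as $\Delta t\sum_i b_i\bigl(Q_i g'(\Phi_i)/\sqrt{(g(\Phi_i),1)_h+C_0},\,k_i\bigr)_h$. Adding the two contributions yields
\[
E_h^{n+1}-E_h^n=\Delta t\sum_{i=1}^s b_i\Big(\cL_h\Phi_i+\frac{Q_i g'(\Phi_i)}{\sqrt{(g(\Phi_i),1)_h+C_0}},\,k_i\Big)_h,
\]
and substituting $k_i=\cG_h\bigl(\cL_h\Phi_i+Q_i g'(\Phi_i)/\sqrt{(g(\Phi_i),1)_h+C_0}\bigr)$ together with $b_i\geq 0$ and discrete negative semi-definiteness of $\cG_h$ gives \eqref{eq:High-EL-RK-FD}.

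The main obstacle is conceptual rather than computational: one must verify that every continuous identity used in Theorem \ref{thm:High-SAV-RK} has an exact discrete analog at the Fourier pseudospectral level, with no residual consistency error. This is precisely what the preceding lemmas on $\bD_{s}^{\textrm{a}}$ guarantee, since they show that discrete integration-by-parts holds exactly on the grid, giving self-adjointness of $\cL_h$ and negative semi-definiteness of $\cG_h$ without any boundary remainder. Because the algebra relies only on bilinearity of $(\cdot,\cdot)_h$, linearity and self-adjointness of $\cL_h$, the Butcher symmetry condition, and the scheme's defining equations for $k_i,l_i$, the proof goes through verbatim and the inequality follows termwise from $b_i\geq 0$ and $(\psi,\cG_h\psi)_h\leq 0$.
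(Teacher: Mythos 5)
Your proposal is correct and is exactly the argument the paper intends: the paper omits this proof, stating that it is the same as the semi-discrete Theorem \ref{thm:High-SAV-RK}, with $(\cdot,\cdot)$, $\cL$, $\cG$ replaced by $(\cdot,\cdot)_h$, $\cL_h$, $\cG_h$, relying on the discrete self-adjointness and negative semi-definiteness established in the spatial-discretization section. Your step-by-step mirroring of \eqref{energy-delta-phi-1}--\eqref{delta-energy} is precisely that omitted argument.
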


\begin{thm} \label{thm:High-SAV-FD}
The fully discrete HSAV Gaussian collocation Scheme \ref{eq:Collocation-Scheme-FP} is unconditionally energy stable, i.e., it satisfies the following energy dissipation law
\ben\label{eq:High-EL-FD}
 E_h^{n+1}-E_h^n \nonumber\\
= \Delta t \sum_{i=1}^s b_i \Big( \cL_h u(t_n^i) + \frac{v(t_n^i) g'\big(u(t_n^i)\big)}{\sqrt{\Big(g\big(u(t_n^i)\big), 1\Big)_h+C_0}} , \cG_h\Big[\cL_h u(t_n^i) + \frac{v(t_n^i) g'\big(u(t_n^i)\big)}{\sqrt{\Big(g\big(u(t_n^i)\big), 1\Big)_h+C_0}}\Big] \Big)_h\nonumber\\
\leq  0,
\een
where $E_h^{n} = \frac{1}{2}(\cL_h \phi^n, \phi^n)_h + (q^n)^2 - C_0$ and $t_n^i = t_n + c_i \Delta t$, $c_i$ ($i=1,\cdots,s$) are the Gaussian quadrature nodes, $b_i\geq 0$ ($i=1,\cdots,s$) are the Gauss-Legendre quadrature weights.
\end{thm}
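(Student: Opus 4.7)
The plan is to mimic the semi-discrete argument from Theorem \ref{thm:High-SAV} verbatim, with the $L^2$ inner product $(\cdot,\cdot)$ replaced by the discrete inner product $(\cdot,\cdot)_h$, relying on the two spatial-discrete structural properties established earlier: $\cL_h$ is self-adjoint and $\cG_h$ is negative semi-definite with respect to $(\cdot,\cdot)_h$. These were obtained from the discrete integration-by-parts formulae induced by the antisymmetry of $\bD_{2s-1}^{\mathrm{a}}$ and symmetry of $\bD_{2s}^{\mathrm{a}}$.

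First I would rewrite the energy increment as a time integral over $[t_n,t_{n+1}]$:
\[
E_h^{n+1}-E_h^n = \tfrac{1}{2}\bigl(\cL_h u(t_{n+1}),u(t_{n+1})\bigr)_h - \tfrac{1}{2}\bigl(\cL_h u(t_n),u(t_n)\bigr)_h + v(t_{n+1})^2 - v(t_n)^2,
\]
and then use the fundamental theorem of calculus, pulling the time derivative through $\cL_h$ (which is independent of $t$) and exploiting self-adjointness of $\cL_h$ in $(\cdot,\cdot)_h$ to obtain
\[
E_h^{n+1}-E_h^n = \int_{t_n}^{t_{n+1}} \bigl[(\dot u(t),\cL_h u(t))_h + 2\dot v(t)\,v(t)\bigr]\,dt.
\]

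Next I would exploit the polynomial structure. Since $u(t)$ is a matrix polynomial of degree $s$ in $t$ (taking values in $V_h$) and $v(t)$ is a scalar polynomial of degree $s$, the integrand is a polynomial in $t$ of degree at most $2s-1$; hence the $s$-point Gauss–Legendre quadrature with nodes $t_n^i$ and weights $b_i$ integrates it exactly. Substituting the collocation equations from Scheme \ref{eq:Collocation-Scheme-FP} at $t=t_n^i$, I would recognize $2\dot v(t_n^i)\,v(t_n^i)$ as the inner product of the nonlinear auxiliary term against $\dot u(t_n^i)$ and combine it with $(\dot u(t_n^i),\cL_h u(t_n^i))_h$ into the bracketed chemical-potential-type expression
\[
\mu_i := \cL_h u(t_n^i) + \frac{v(t_n^i)\,g'\bigl(u(t_n^i)\bigr)}{\sqrt{\bigl(g(u(t_n^i)),1\bigr)_h + C_0}}.
\]
Using the collocation relation $\dot u(t_n^i) = \cG_h \mu_i$ then yields the right-hand side of \eqref{eq:High-EL-FD}, and the inequality follows from $b_i\ge 0$ and the negative semi-definiteness of $\cG_h$.

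The only non-routine bookkeeping will be verifying that the Fourier pseudospectral semi-discretization preserves exactly the two properties needed — self-adjointness of $\cL_h$ and negative semi-definiteness of $\cG_h$ with respect to $(\cdot,\cdot)_h$ — for the particular $\cL$ and $\cG$ arising in the model; but this has already been argued in the preceding paragraphs of Section \ref{sec:spaceDis}. Apart from that, every step is a line-for-line transcription of the proof of Theorem \ref{thm:High-SAV}, and no new analytical obstacle arises because the Gauss–Legendre exactness argument depends only on the polynomial degree of the integrand in $t$, not on the spatial discretization.
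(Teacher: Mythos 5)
Your proposal is correct and is exactly the argument the paper intends: the authors explicitly omit this proof, stating that it is analogous to the semi-discrete Theorem \ref{thm:High-SAV}, and your line-for-line transcription with $(\cdot,\cdot)$ replaced by $(\cdot,\cdot)_h$, together with the already-established self-adjointness of $\cL_h$ and negative semi-definiteness of $\cG_h$, is precisely that analogue. The Gauss--Legendre exactness step for the degree-$(2s-1)$ integrand carries over unchanged, so no new issues arise.
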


As the proofs of Theorem \ref{thm:High-SAV-RK-FD} and \ref{thm:High-SAV-FD} are similar with their semi-discrete version as shown in Theorem \ref{thm:High-SAV-RK} and \ref{thm:High-SAV}, we thus omit the details for simplicity.

\section{Numerical examples}\label{sec:Numer}

In this section, we conduct several numerical tests to verify the theoretical results in the previous section. We emphasize that the newly proposed HSAV schemes could reach arbitrarily high order accuracy in time (with proper choice of the Gaussian collocation points), and they are all unconditionally energy stable. For simplicity, in the rest of this paper, we only use 4th and 6th order for demonstration purpose. Moreover, the CPU time is calculated with a 3.2 GHz Intel Core i7 using Matlab R2018b on MacOS Mojave version 10.14.2.

\textbf{Example 1: the Allen-Cahn equation.} First of all, we test the proposed numerical schemes for solving the widely-used Allen-Cahn (AC) equation \cite{AC}. Mainly, the AC equation is proposed as
\beq
\partial_t \phi = - M \Big(- \varepsilon^2 \Delta \phi + (\phi^3-\phi)  \Big),
\eeq 
where $M$ is the mobility parameter and $\varepsilon$ controls the interfacial thickness. We choose the broadly embraced benchmark problem \cite{ChenL2002}, i.e. set the initial profile for $\phi$ as:
\beq
\phi(x,y,t=0) = \left\{
\bea{l}
1, \quad x^2 + y^2 < 100^2, \\
-1, \quad x^2 + y^2 \geq 100^2,
\eea 
\right.
\eeq 
which is a disk centered at the origin, and use the domain $[-128 \,\,\,\ 128]^2$. The parameters are chosen as $M=\varepsilon=1$. It is known that the area of the disk will shrink, following the linear dynamics $V=\pi R_0^2 - 2\pi t$ asymptotically, with $R_0$ the initial radius. Here we test the dynamics using the proposed HSAV schemes. The numerical results are summarized in Figure \ref{fig:AC-Circle}. We observe that the HSAV schemes can use much larger time step to capture the correct volume shrinking dynamics than the classical SAV schemes.
\begin{figure}
\center
\subfigure[SAV-CN Scheme]{\includegraphics[width=0.3\textwidth]{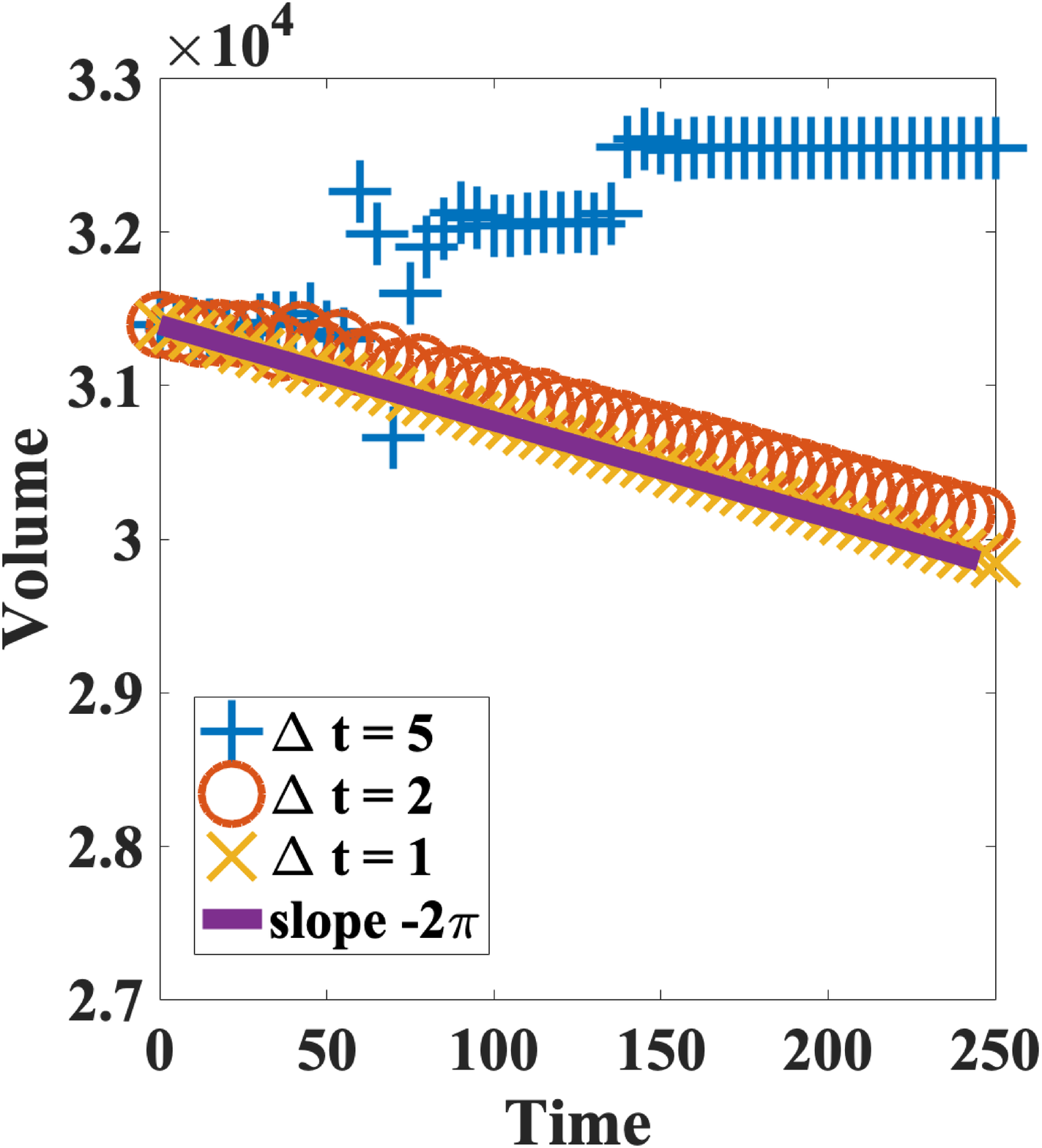}}
\subfigure[HSAV 4th-order Scheme]{\includegraphics[width=0.3\textwidth]{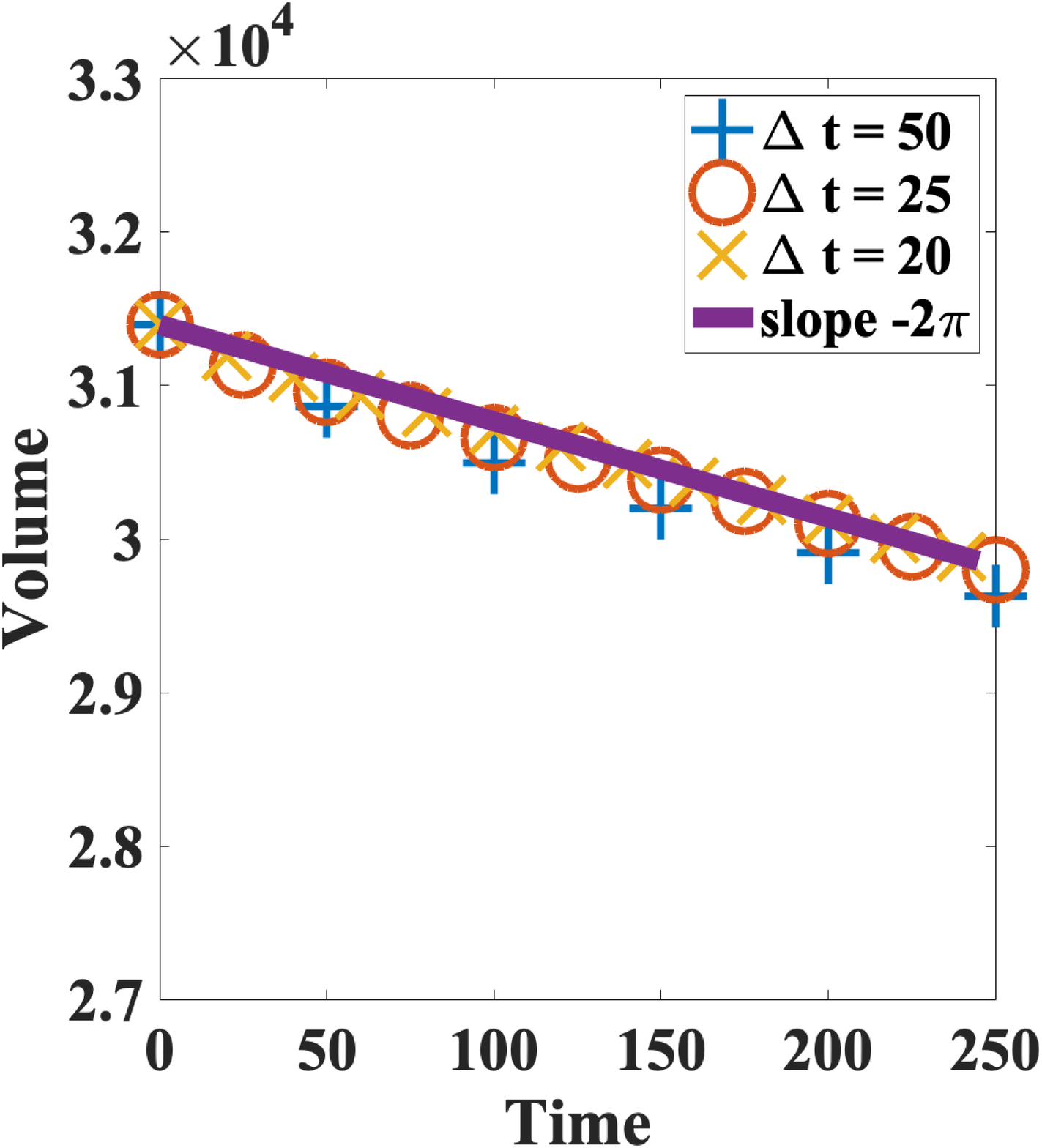}}
\subfigure[HSAV 6th-order Scheme]{\includegraphics[width=0.3\textwidth]{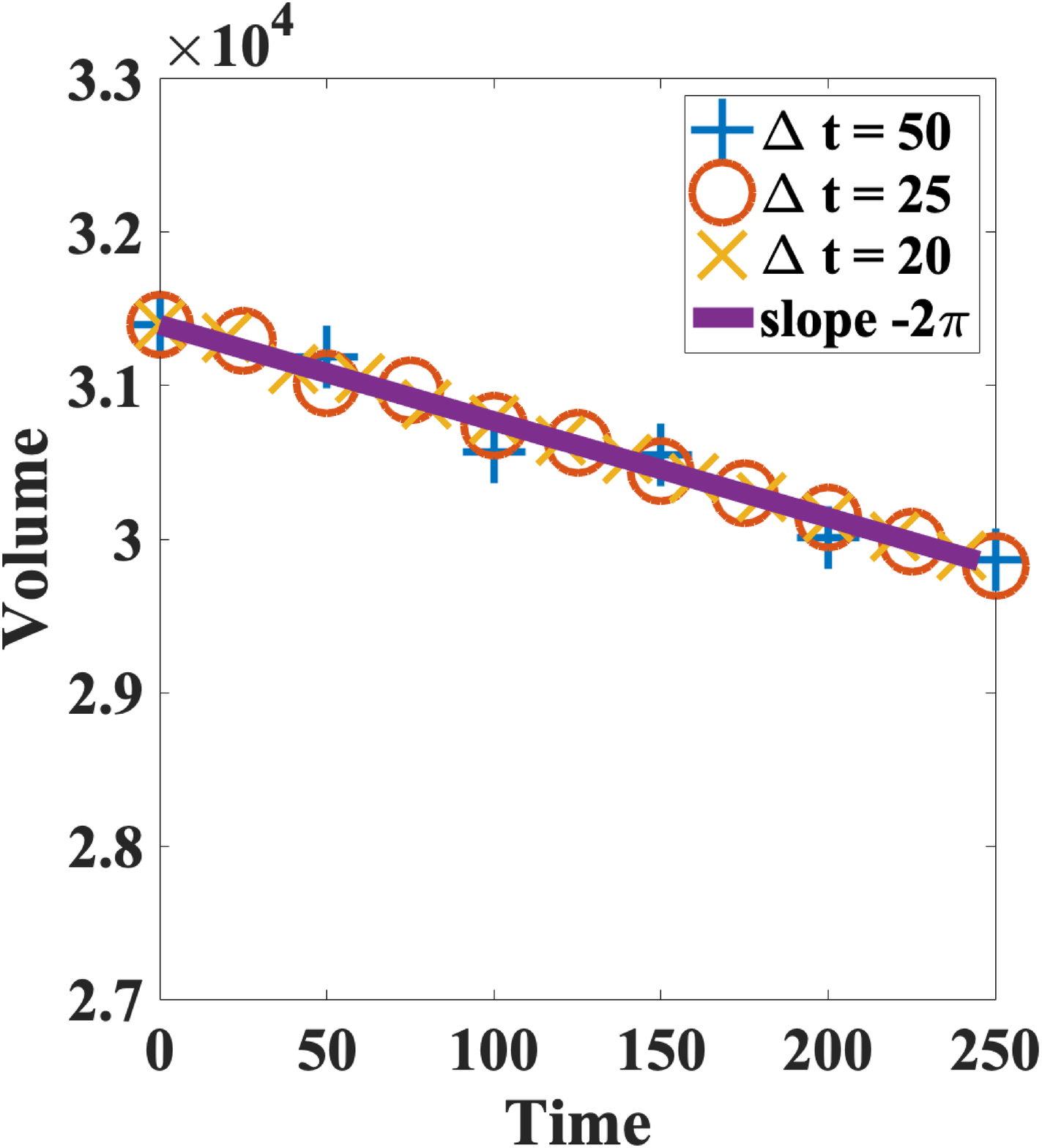}}
\caption{Benchmark problem for the Allen-Cahn equation. This figure shows the volume of the disk decreasing with time with different schemes and various time steps.}
\label{fig:AC-Circle}
\end{figure}

\textbf{Example 2: the Cahn-Hilliard equation.} Next, we study the widely-used Cahn-Hilliard equation with the Ginzburg-Landau free energy.  Specifically, given the Ginzburg-Landau free energy $F=(-\frac{\varepsilon^2}{2}\Delta \phi, \phi) + (\frac{1}{4}(1-\phi^2)^2,1)$ and constant mobility $\lambda$, the model is proposed as
\beq \label{eq:CH}
\partial_t \phi = \lambda \Delta \Big[  -\varepsilon^2 \Delta \phi + (\phi^3-\phi) \Big] .
\eeq
If we set $\cG= \lambda \Delta$, $\cL= -\varepsilon^2 \Delta + \gamma_0$ and $g(\phi) = \frac{1}{4}(1-\phi^2)^2 - \gamma_0 \phi^2 +\frac{C_0}{|\Omega|}$, where $C_0$ is a constant such that $(g,1)>0$. By introducing the scalar auxiliary variable $q=\sqrt{(g,1)}$, the Cahn-Hilliard equation \eqref{eq:CH} could be rewritten as the reformulated gradient flow form of \eqref{eq:gradient_flow_SAV}
\beq
\bea{l}
\partial_t \phi = \lambda \Delta \Big[ -\varepsilon^2 \Delta \phi + \gamma_0 \phi  + \frac{q}{\sqrt{(g,1)}}g' \Big], \\
\partial_t q = ( \frac{g'}{2\sqrt{(g,1)}}, \partial_t \phi),
\eea
\eeq
with the consistent initial condition for $q$, i.e. $q(t=0)= \sqrt{ (g, 1)}|_{t=0}$.

First of all, we conduct a time-step refinement test to verify the accuracy of our proposed high order schemes.  We choose the domain as $[0,1] \times [0, 1]$ and spatial meshes $N_x=N_y=256$. The parameters are chosen as $\lambda=10^{-3}$, $\epsilon=0.01$, $\gamma_0=1$, $C_0=1$. The initial profile for $\phi$ is given as $\phi(x,y,t=0)=\sin(2\pi x)\sin(2\pi y)$. Both the SAV Crank-Nicolson (SAV-CN) scheme (see \cite{SAV-1,SAV-2}) and the newly proposed HSAV Scheme \ref{eq:Collocation-Scheme} with fourth order and sixth order collocation points are tested. The numerical errors in $L^2$ norm at $t=1$ are summarized in Figure \ref{fig:CH-order}. We observe that the two HSAV schemes reach the fourth and sixth order accuracy respectively. In particular, the $L^2$ errors of HSAV schemes are significantly (in several orders of magnitudes) smaller than the SAV-CN scheme.

\begin{figure}[H]
\center
\includegraphics[width=0.7\textwidth]{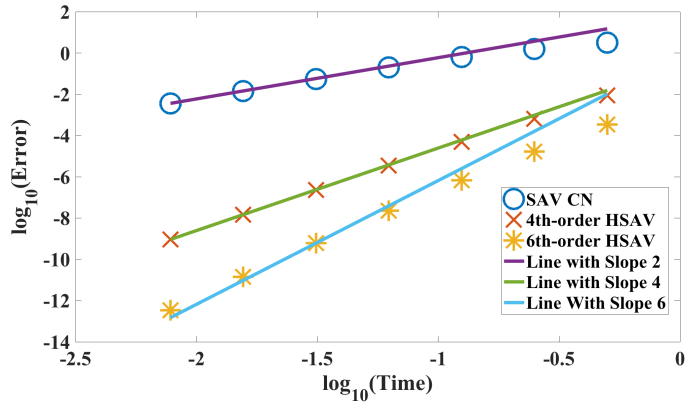}
\caption{Time refinement test for the SAV schemes solving the Cahn-Hilliard equation. This figure demonstrates the HSAV scheme can reach its high-order accuracy. And its numerical error is dramatically smaller than the SAV-CN scheme.}
\label{fig:CH-order}
\end{figure}

In addition, to assure the $L^2$ norm of the numerical error for $\phi$ at $t=1$ smaller than $10^{-10}$, the approximately minimum time steps are $\delta t=10^{-5}$ for the SAV scheme, $\delta t=0.004$ for the HSAV 4th-order scheme, $\delta t=0.02$ for the HSAV 6th-order scheme. The total CPU time is summarized in Table \ref{tab:CH-CPU}, where we observe the HSAV scheme takes much less CPU time than the SAV scheme. It indicates the HSAV schemes are superior to the SAV schemes for accurate long-time dynamic simulations.

\begin{table}[H]
\caption{ Total CPU time using various numerical schemes solving the CH model.}
\label{tab:CH-CPU}
\scriptsize
\centering
\begin{tabular}{c|c|c|c}
\hline
 SAV Scheme  HSAV 4th-order Scheme  HSAV 6th-order Scheme \\
\hline
$\delta t$   $0.00001$  $0.025$  $0.02$ \\
\hline
CPU time (seconds)  165.32   3.00   2.62 \\
\hline
\end{tabular}
\end{table}

Next we compare the different SAV schemes for  simulating the coarsening dynamics of two-phase immersible fluids using the Cahn-Hilliard equation in \eqref{eq:CH}. We choose the domain as $[0,4\pi] \times [0,4\pi]$ and use meshes $N_x=N_y=512$. The parameters are chosen as $\lambda = 0.1$, $\epsilon = 0.025$, $\gamma_0=1$, $C_0=1$. And we use an initial profile of $\phi$ as
\beq
\phi(x,y,t=0)= 0.001 rand(x,y),
\eeq
where $rand(x,y)$ generates random number between $-1$ and $1$.  The predicted energy evolution using different SAV schemes with various time steps are summarized in Figure \ref{fig:Coarsening-SAV}. We observe that for the SAV-CN scheme, it can predict the correct energy evolution with time step $\Delta t = 0.00025$ (where the predicted energy evolution with time step $\Delta t=0.0005$ is noticeably inaccurate). For the fourth order HSAV scheme, it can predict accurate energy evolution even with time step $\Delta t=0.05$; and for the sixth order HSAV scheme, it even works well with time step $\Delta t=0.1$, which is more than $10^3$ bigger than the one with the SAV-CN scheme.

\begin{figure}[H]
\center
\includegraphics[width=0.7\textwidth]{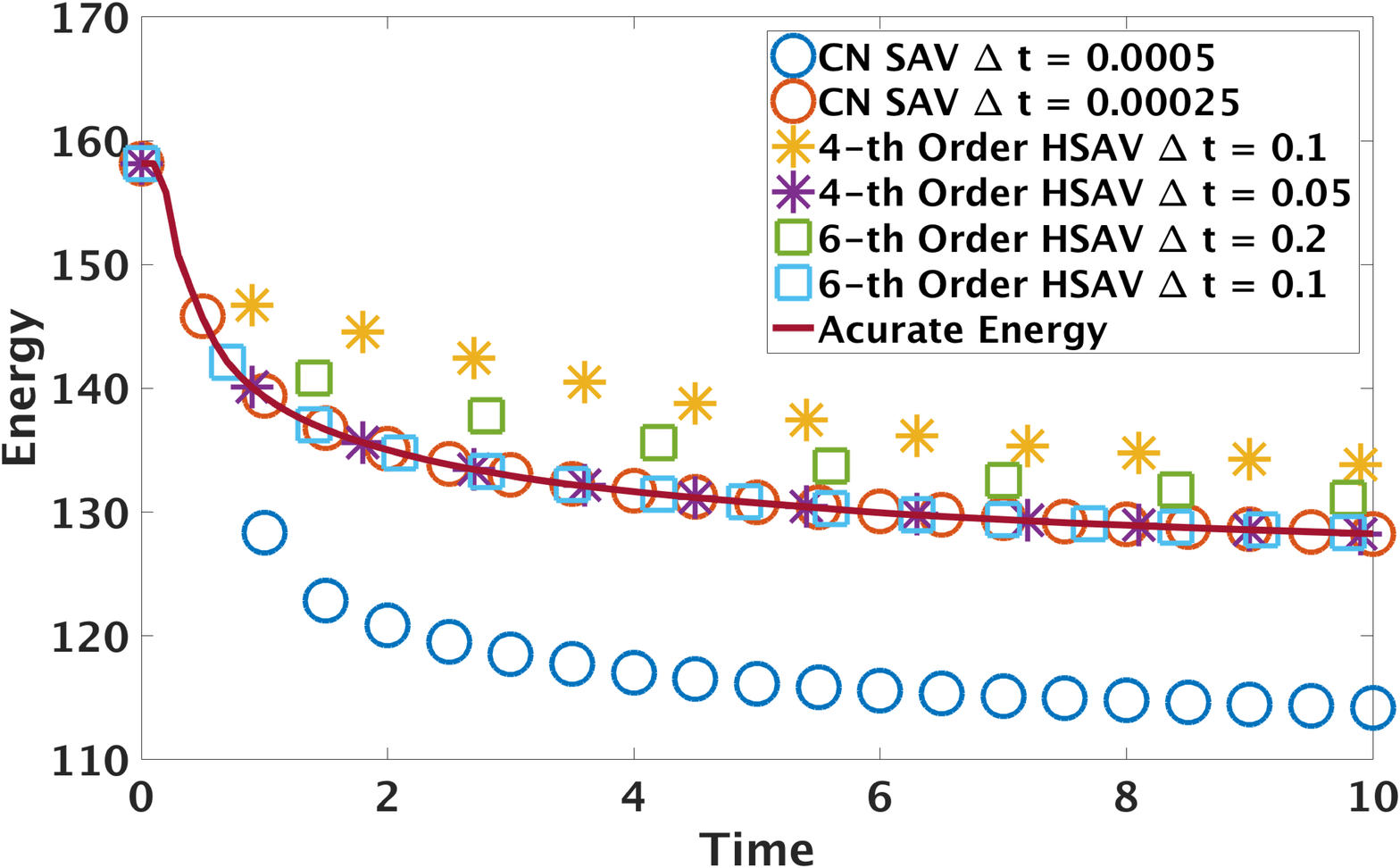}
\caption{A comparison of energy evolution using different SAV numerical schemes with various time steps. This figure illustrates the HSAV scheme could predict accurate energy evolution with much larger time steps than the  SAV-CN scheme. }
\label{fig:Coarsening-SAV}
\end{figure}

Then we use the HSAV schemes to conduct the long-time dynamic simulations of coarsening. We use the same parameters as above, and choose the initial profile
\beq
\phi(x,y,t=0) = \phi_0 + 10^{-3} rand(x,y),
\eeq
where $\phi_0$ is a constant and $rand(x,y)$ generates a random number in the range of $-1$ to $1$. Then we choose $\phi_0=0, 0.1, 0.5$. We use the sixth order HSAV scheme with the time step $\Delta t = 0.1$. The simulation results are summarized in Figure \ref{fig:Coarsening-Ex}, where we present the profile of $\phi$ at different times. We can observe the HSAV schemes can capture the phase transition dynamics accurately even with such time step time size. In particular, when $\phi_0$ is small, i.e., the two components have similar total volume,  the spinodal decomposition takes effect. When the volume of one phase is dominant, (for instance, $\phi_0=0.5$), the nucleation takes effects. These findings are a strong agreement with reports in \cite{GomezHughesJCP2011}.

\begin{figure}[H]
\center
\subfigure[]{
\includegraphics[width=0.22\textwidth]{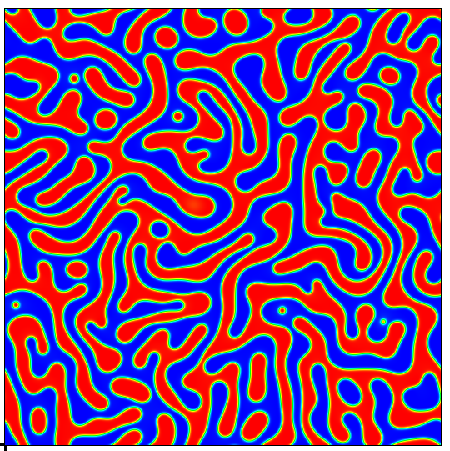}
\includegraphics[width=0.22\textwidth]{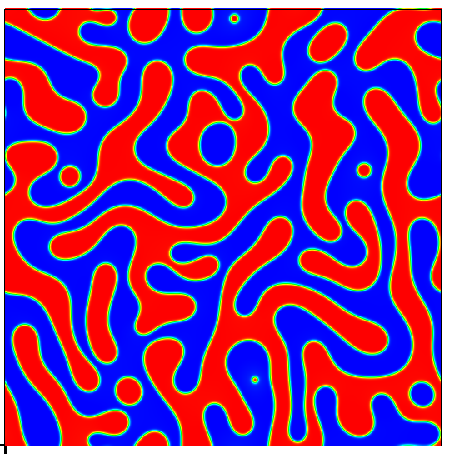}
\includegraphics[width=0.22\textwidth]{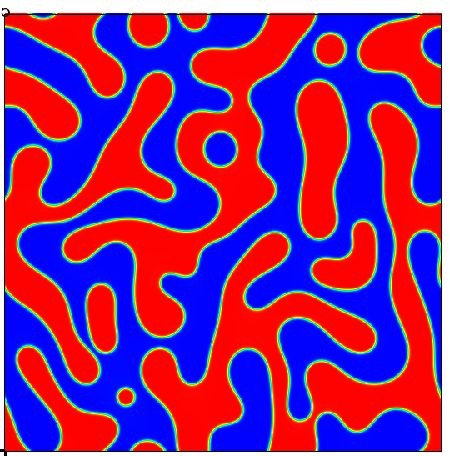}
\includegraphics[width=0.22\textwidth]{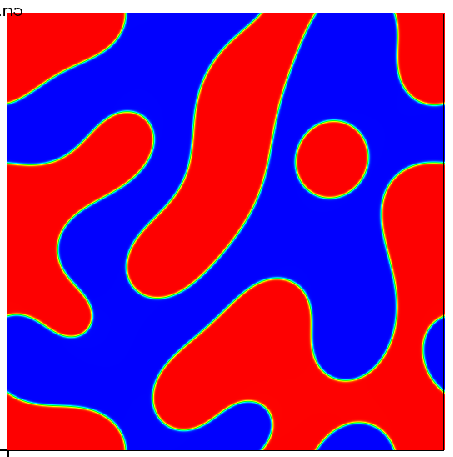}
}

\subfigure[]{
\includegraphics[width=0.22\textwidth]{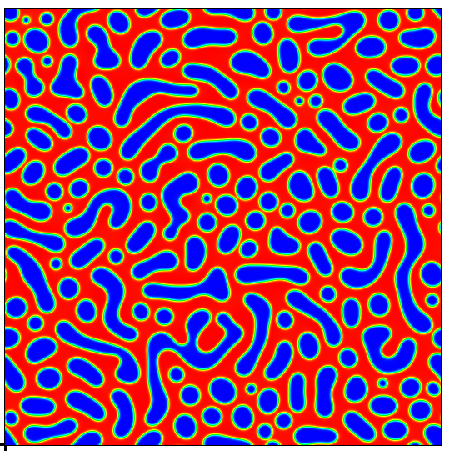}
\includegraphics[width=0.22\textwidth]{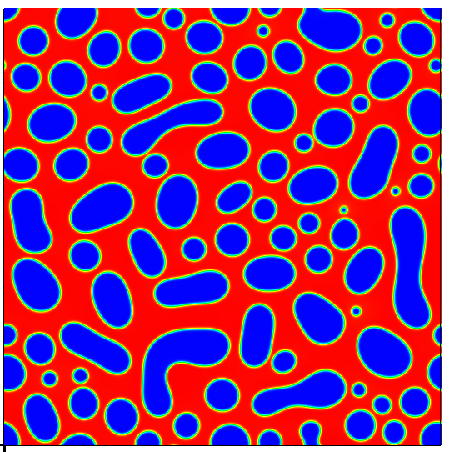}
\includegraphics[width=0.22\textwidth]{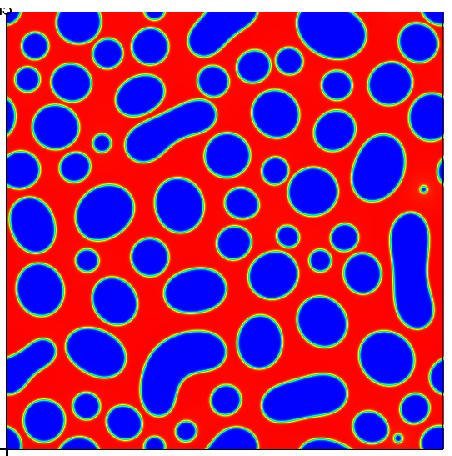}
\includegraphics[width=0.22\textwidth]{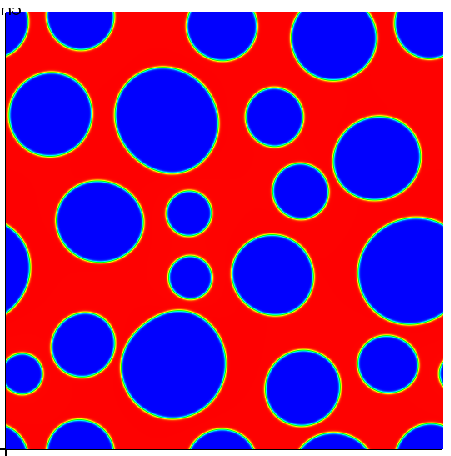}
}

\subfigure[]{
\includegraphics[width=0.22\textwidth]{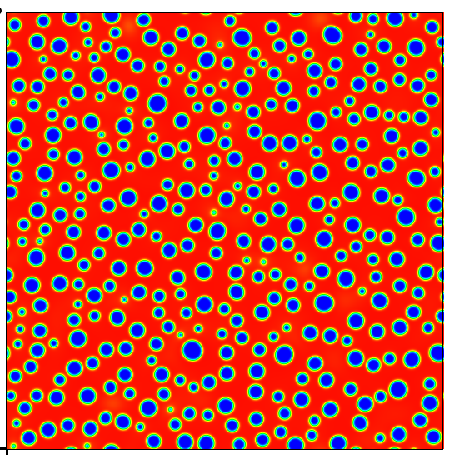}
\includegraphics[width=0.22\textwidth]{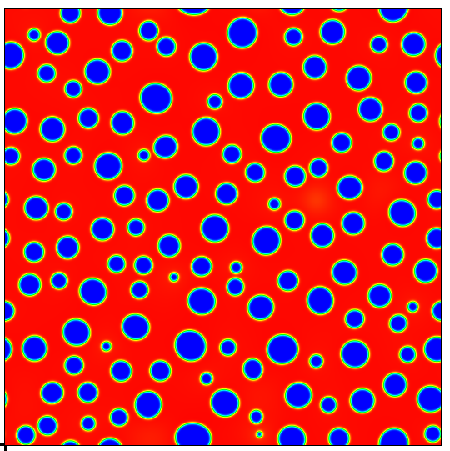}
\includegraphics[width=0.22\textwidth]{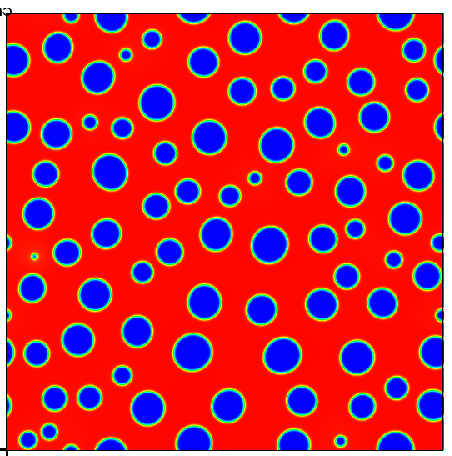}
\includegraphics[width=0.22\textwidth]{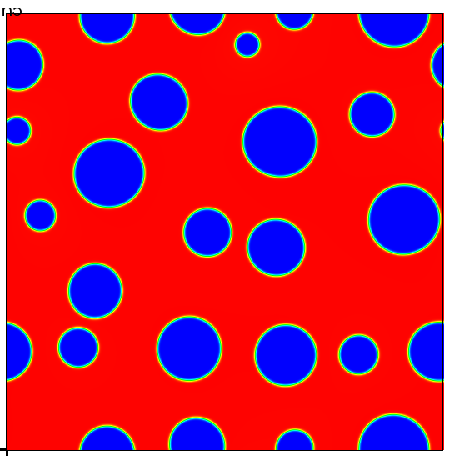}
}

\caption{Coarsening dynamics of two phase immersible fluid using the 6th order HSAV scheme with time step $\Delta t=0.1$. Here we choose $\phi_0=0,0.1,0.5$, and the results are shown in (a)-(c) respectively. This figure presents the profile of $\phi$ at time $t=10,50,100,1000$ for the simulation in Figure \ref{fig:Coarsening-SAV}. It illustrates the HSAV scheme could capture the phase separation dynamics accurately even with large time step.}
\label{fig:Coarsening-Ex}
\end{figure}

Next, we study the power-law coarsening dynamics. Here we set $\lambda=0.02$, $\varepsilon=0.05$, and domain  $[0 \,\,\, 4\pi]^2$,  and use the initial profile $\phi(x,y,t=0)=0.001 rand(-1,1)$. It is known that the effective free energy decreases asymptotically following a power law $E(t) \approx O(t^{-1/3})$. We use the 4th, 6th order HSAV  schemes and the SAV-CN scheme to calculate it, with $256^2$ meshes and $\gamma_0=1$, $C_0=1$. The obtained results are summarized in Figure \ref{fig:CN-PowerLaw}.  We observe that all the numerical schemes can capture the power law dynamics very well when the time step is small enough, saying when $\delta t=10^{-3}$. However, the maximum time step of capturing the correct dynamics using the HSAV scheme is much larger than that of the SAV-CN scheme.

\begin{figure}[H]
\center
\subfigure[SAV-CN Scheme]{\includegraphics[width=0.3\textwidth]{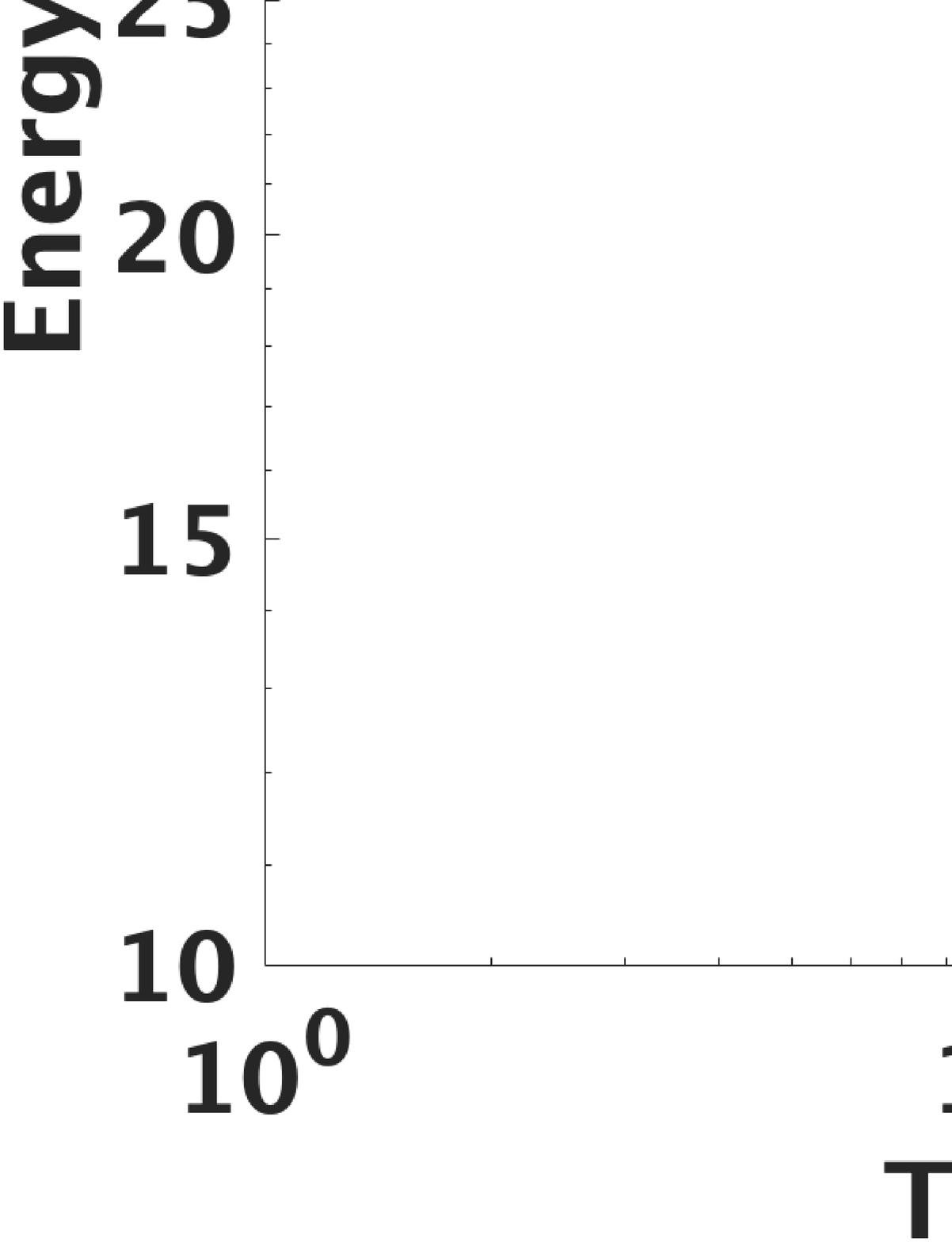}}
\subfigure[HSAV 4th order Scheme]{\includegraphics[width=0.3\textwidth]{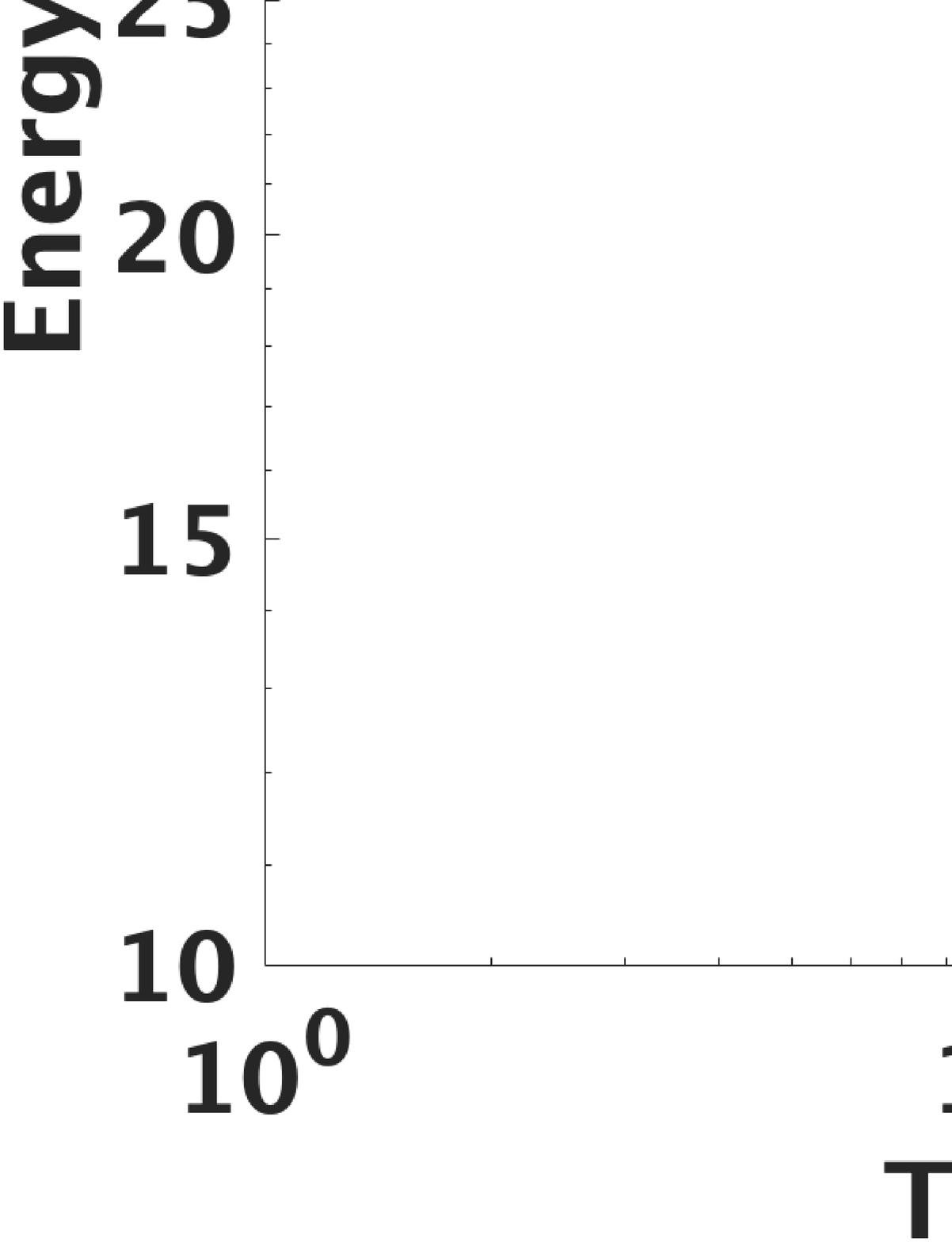}}
\subfigure[HSAV 6th order Scheme]{\includegraphics[width=0.3\textwidth]{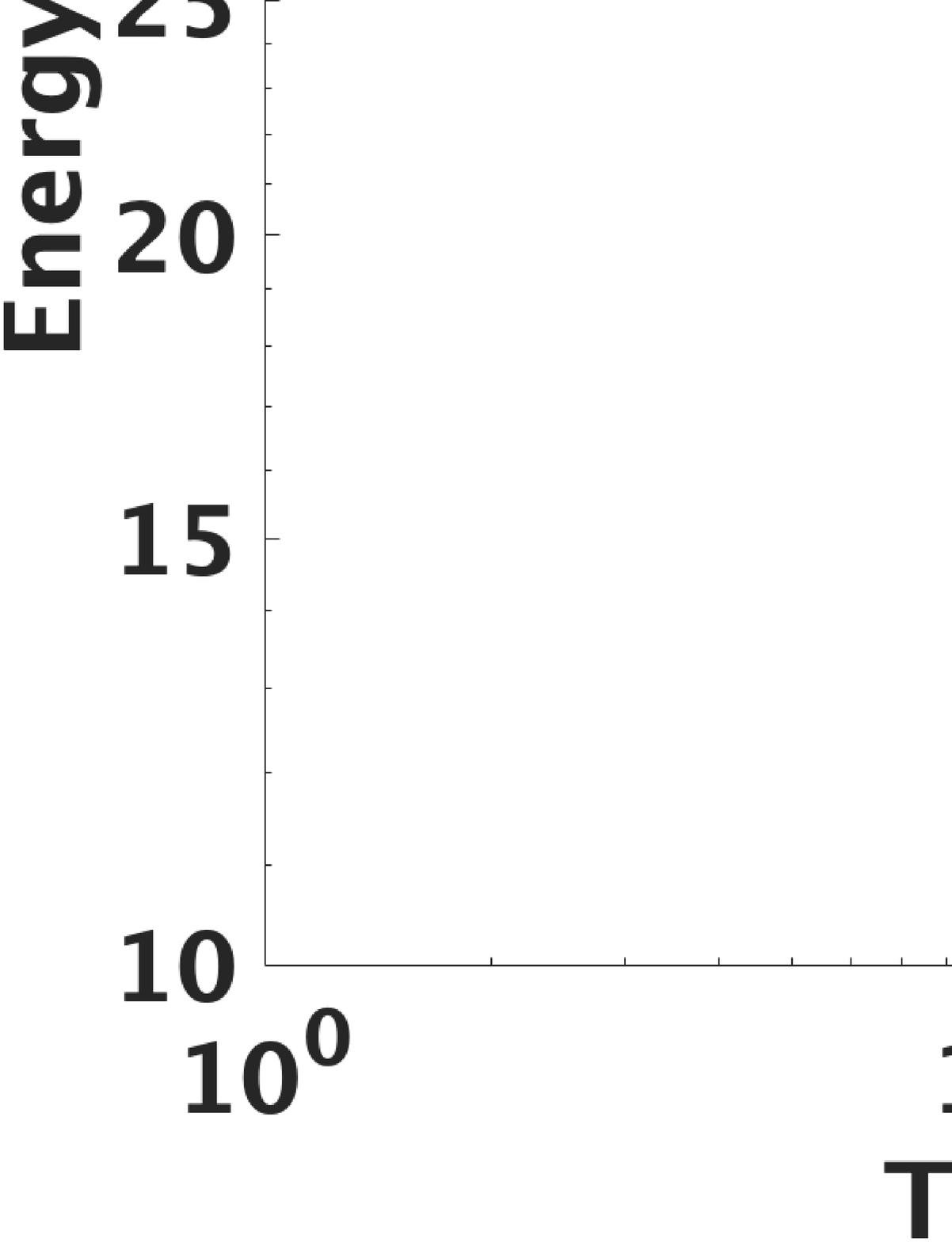}}
\caption{The energy evolution with different time steps. Here the log-log scale of the energy with respect to time is plotted.}
\label{fig:CN-PowerLaw}
\end{figure}

\textbf{Example 3: The Molecular Beam Epitaxy Model.} In the last case, we consider the molecular beam epitaxy (MBE) growth model without slop selection \cite{WangWangWiseDCDS2010}. There is a huge amount of work in literature on investigating the MBE models analytically and numerically \cite{WangWangWiseDCDS2010,ShenWangWise2012,QiaoSunZhangMC2015,QiaoWangWiseZhangIJNAM2017,ChenWangWangWiseJSC2014,FengWangWiseZhang2017,JuLiQiaoZhangMC2017,SAV-2,YangZhaoWangJCP2017,FengWangWiseMBE2017,XuSIAM2006,Kohn2003,LiLiu2003,LiLiuJNS2004}.

Given the height profile of MBE denoted as $\phi$, the evolution equation reads as
\beq
\partial_t \phi = -M \Big( \varepsilon^2 \Delta^2 \phi + \nabla \cdot ( (1-|\nabla \phi|^2)\nabla \phi) \Big),
\eeq
with periodic boundary condition. This model could be viewed as a $L^2$ gradient flow with respect to the effective free energy
\beq
F(\phi) = \int_\Omega \Big( \frac{\varepsilon^2}{2} (\Delta \phi)^2 + \frac{1}{4}( |\nabla \phi|^2 - 1)^2 \Big) d\bx,
\eeq
with a constant mobility $M$.

If we denote $\cG= -M$, $\cL= \varepsilon^2 \Delta^2  - \gamma_0 \Delta$ and $g(\nabla \phi) = \frac{1}{4} (|\nabla \phi|^2 - 1 -\gamma_0)^2 + \frac{C_0}{|\Omega|}$, and introduce the scalar auxiliary variable
\beq
q = \sqrt{(g,1)}, 
\eeq
and the intermediate function
\beq
\displaystyle
H(\nabla \phi) = \frac{  \nabla \cdot \Big(  (\gamma_0+ 1-|\nabla \phi|^2)\nabla \phi \Big)}{2\sqrt{ \int_\Omega \frac{1}{4} (|\nabla \phi|^2 - 1 -\gamma_0)^2 d\bx + C_0}},
\eeq
the equation can be reformulated as
\beq
\bea{l}
\partial_t \phi = -M \Big( \varepsilon^2 \Delta^2 \phi - \gamma_0 \Delta \phi + 2 q H \Big), \\
\partial_t q = \Big(H, \partial_t \phi \Big),
\eea
\eeq
with the consistent initial condition for $q$, i.e. $q(t=0)= \sqrt{(g,1)}|_{t=0}$.

First of all, we would like to test the convergence rate for our proposed scheme. Following the strategy in example 1, we use Cauchy sequences, where the errors are calculated as the differences between numerical solutions with adjacent time steps. Wet set $M=1$, $\varepsilon=1$ and the domain $[0 \,\,\, 2\pi]^2$. We use a smooth initial condition $\phi(x,y,0)=\sin(x)\sin (y)$, and choose $\gamma_0=1$, $C_0=1$, $256^2$ meshes.
The refinement-test results are summarized in Figure \ref{fig:MBE-order}. We observe that all the schemes reach their expected orders of convergence when the time-step is small enough. However, the HSAV schemes have dramatically smaller numerical errors (with several magnitudes smaller) than the SAV-CN scheme, which highlights the advantages of the newly proposed HSAV schemes.

\begin{figure}[H]
\center
\includegraphics[width=0.7\textwidth]{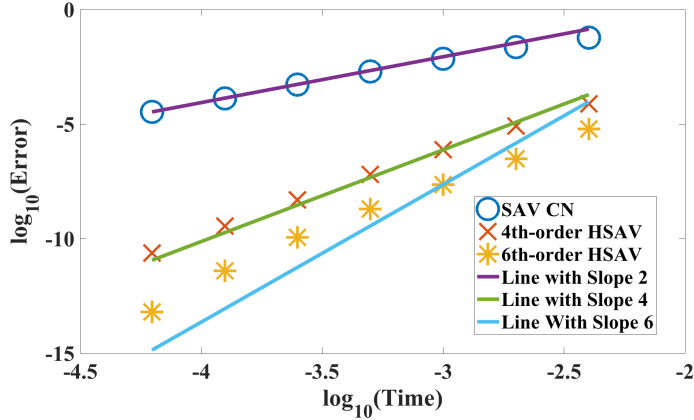}
\caption{Time refinement test for the SAV schemes. This figure demonstrates the HSAV scheme can reach its high-order accuracy. And its numerical error is dramatically smaller than the  SAV-CN schemes.}
\label{fig:MBE-order}
\end{figure}

Then, the proposed 4th-order and 6th-order  HSAV schemes are tested via a benchmark problem \cite{YangZhaoWangJCP2017}, Consider the domain $[0 , 2\pi]^2$, and  parameters $\varepsilon^2=0.1$, $M=1$. We pick the initial profile
$$
\phi(x,y,t=0)=0.1(\sin(3x)\sin(2y)+\sin(5x)\sin(5y)).
$$
This is a classic example that has been studied intensively \cite{WangWangWiseDCDS2010,ChenZhaoYang2018}.
The effective free energy dynamics using different schemes with various time steps are plotted. We notice that even though all schemes assure the energy dissipation properties, the SAV-CN scheme requires a much smaller time step size (around $\delta t=10^{-4}$) to predict accurate energy dissipation. In the meanwhile, the HSAV scheme could predict energy evolution accurately even with the time step $\delta t = 0.05$, which is $500$ larger than the SAV scheme.

\begin{figure}[H]
\center
\subfigure[SAV scheme]{\includegraphics[width=0.32\textwidth]{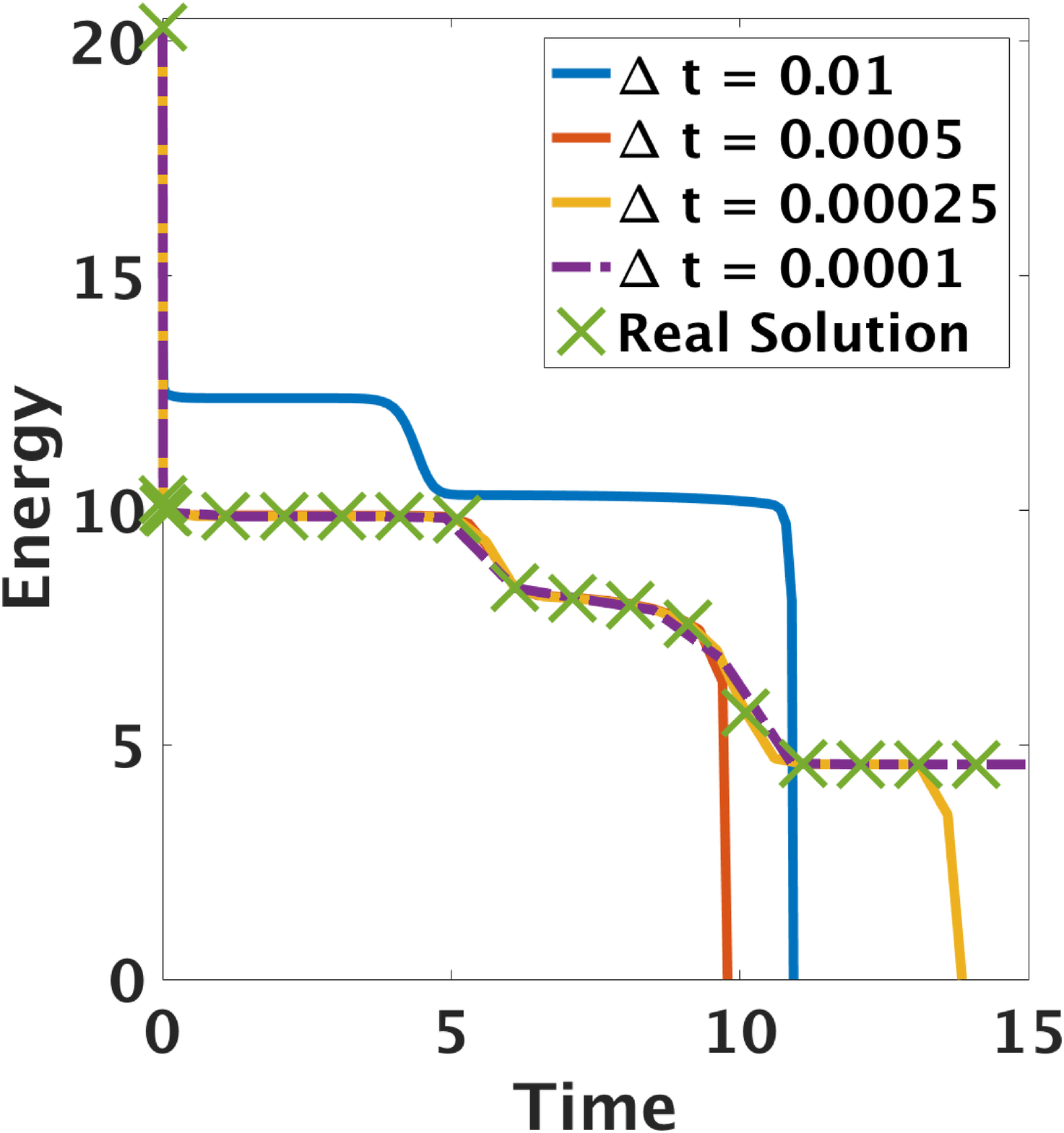}}
\subfigure[4th order HSAV scheme]{    \includegraphics[width=0.32\textwidth]{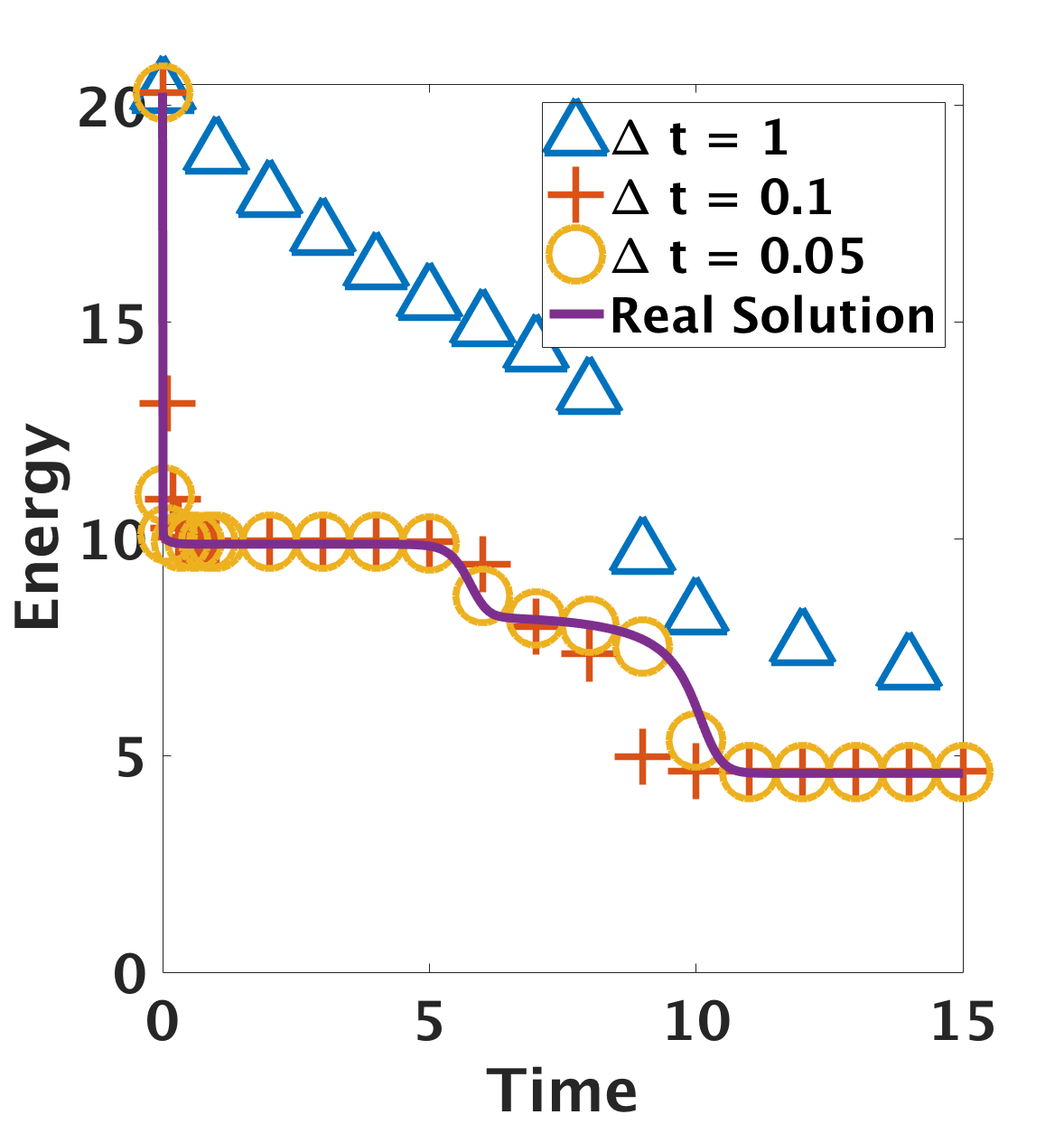}}
\subfigure[6th order HSAV scheme]{\includegraphics[width=0.32\textwidth]{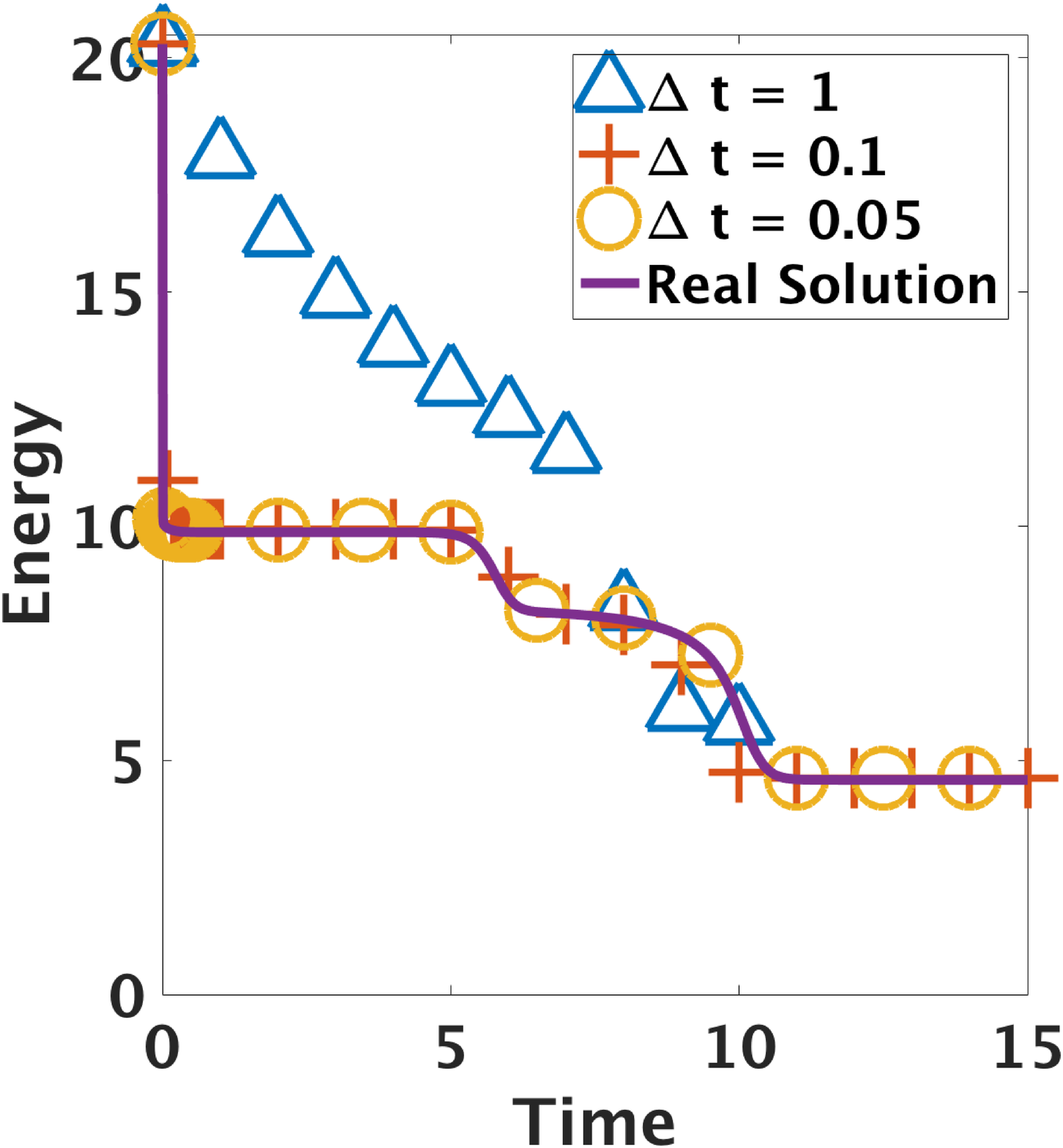}}
\caption{Energy evolution calculated by different SAV schemes with various time step sizes. This figure demonstrates the HSAV scheme could predict accurate energy dissipation dynamics with much larger time steps than the SAV scheme while solving the MBE model with slope selection.}
\label{fig:MBE-energy}
\end{figure}

Besides, the total CPU times using each scheme to calculate the MBE model till $t=15$ is summarized in Table  \ref{tab:MBE-CPU}, where we observe the HSAV scheme is much faster than the SAV scheme, as much larger time steps can be used for HSAV scheme while preserving the desired accuracy.

\begin{table}[H]
\caption{ Total CPU time using various numerical schemes solving the MBE model.}
\label{tab:MBE-CPU}
\scriptsize
\centering
\begin{tabular}{c|c|c|c}
\hline
 & SAV-CN Scheme &  HSAV 4th-order Scheme  & HSAV 6th-order Scheme \\
\hline
$\delta t$ &  $0.0001$ & $0.025$ & $0.05$ \\
\hline
CPU time (seconds) &  672.39 & 79.45  & 74.72 \\
\hline
\end{tabular}
\end{table}

One simulation using fourth order HSAV scheme with the time step $\delta t=0.025$ is shown in Figure \ref{fig:MBE-coarsening}, where the height profile of $\phi$ at different times are shown. The patterns agree very well with other numerical solvers in literature, while we could use an extremely larger time step than the time-step used in other literature.
\begin{figure}[H]
\center
\subfigure[$t=0$]{\includegraphics[width=0.25\textwidth]{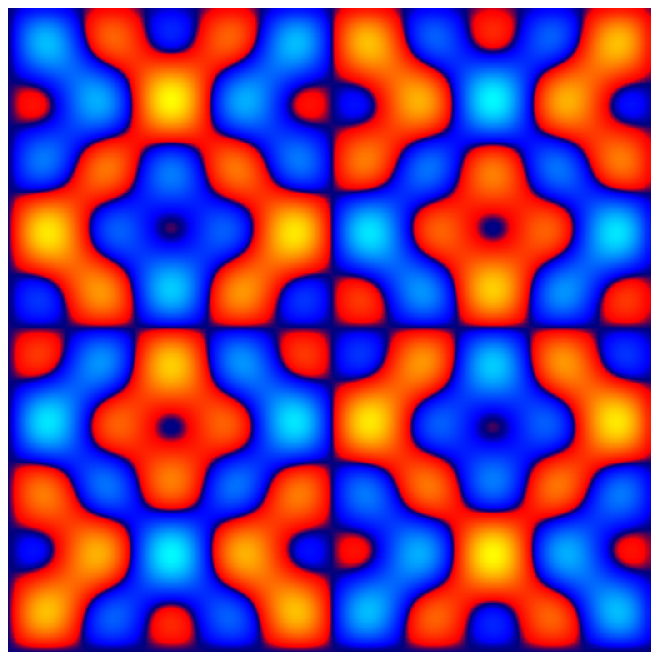}}
\subfigure[$t=0.025$]{\includegraphics[width=0.25\textwidth]{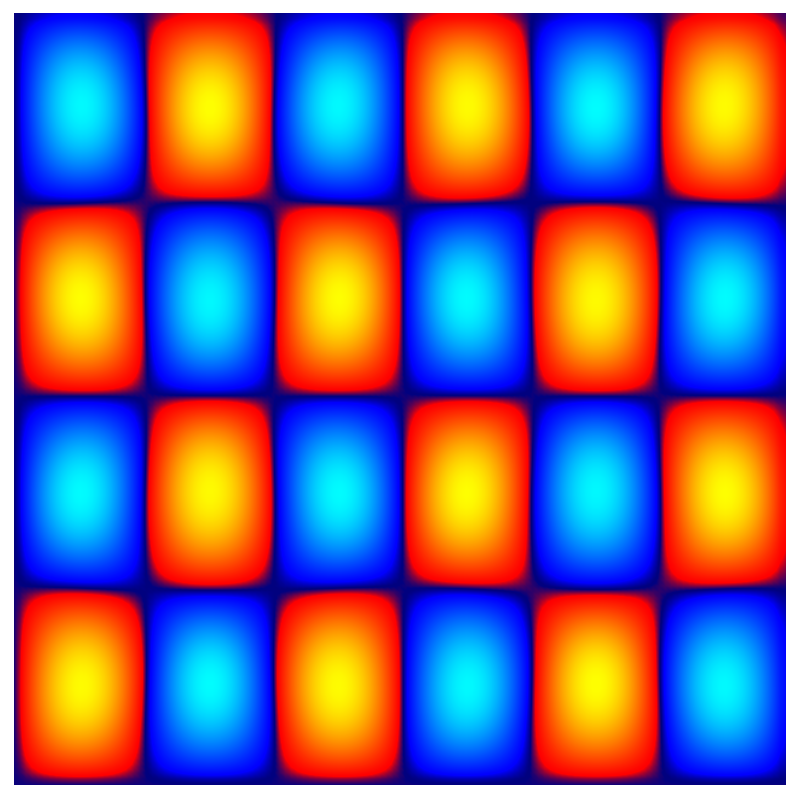}}
\subfigure[$t=2.5$]{\includegraphics[width=0.25\textwidth]{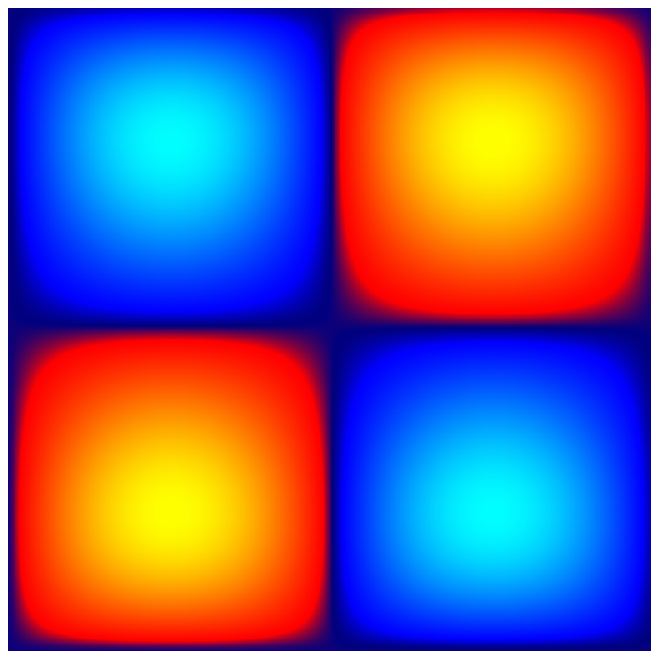}}

\subfigure[$t=5.5$]{\includegraphics[width=0.25\textwidth]{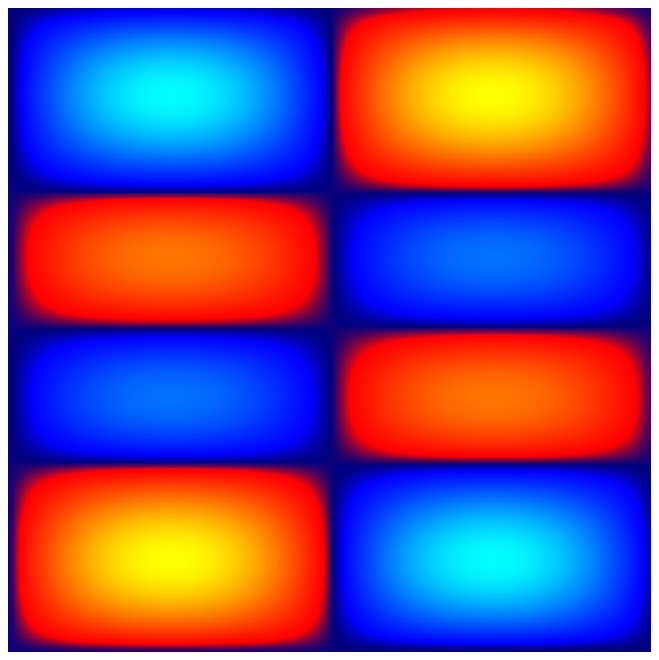}}
\subfigure[$t=8$]{\includegraphics[width=0.25\textwidth]{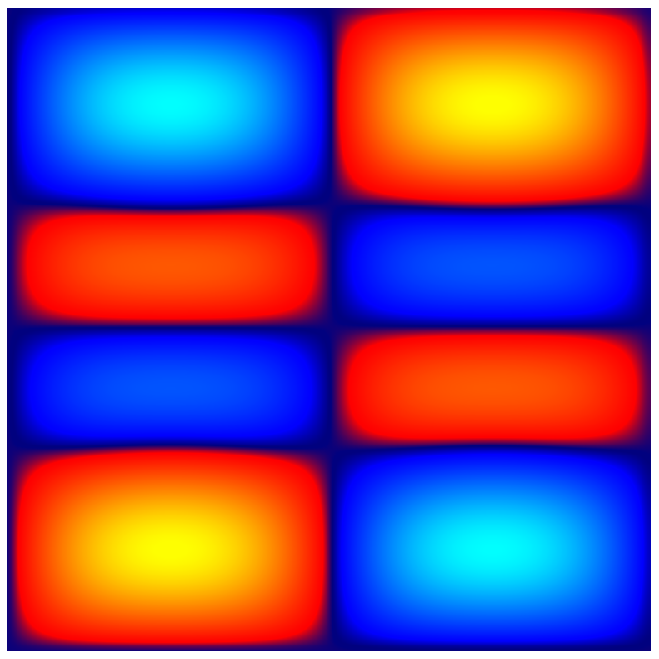}}
\subfigure[$t=30$]{\includegraphics[width=0.25\textwidth]{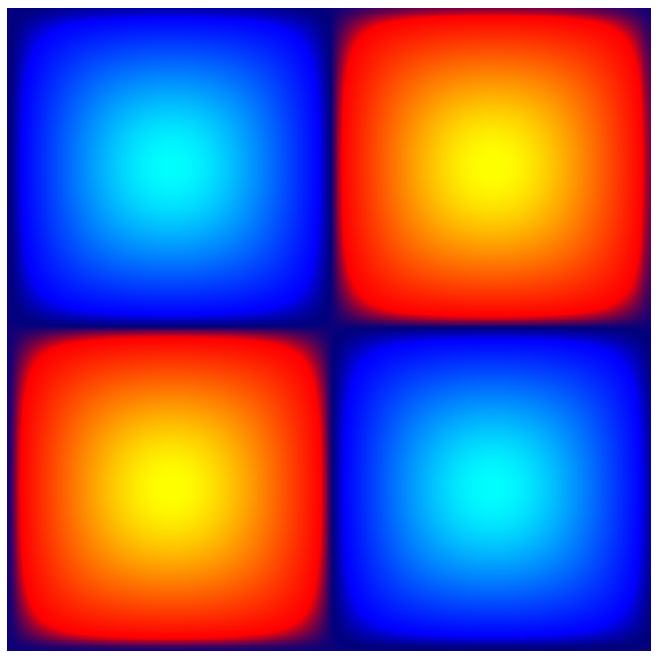}}
\caption{The isolines of numerical solutions of the height function $\phi$ for the MBE model with slope selection using the 4th order HSAV  scheme. The time step is $\delta t=0.025$. Snapshots are taken at $t=0, 0.05, 2.5, 5.5, 8, 30$, respectively.}
\label{fig:MBE-coarsening}
\end{figure}

\section{Conclusion}
In this paper, we combine the SAV approach with the structure-preserving discretization to propose a new class of energy stable methods for gradient flow models, which we name it the HSAV scheme. The proposed HSAV scheme could reach arbitrarily high-order accuracy in time while respecting the discrete energy dissipation law in term of the modified free energy of the SAV equivalent system. Therefore, the proposed schemes can be used to conduct longtime dynamic simulations for gradient flow problems with larger time steps. Some numerical benchmarks are presented to illustrate the excellent performance of the proposed numerical methods. Note that the proposed HSAV method is rather general to be applied for any gradient flow models derived through energy variation. Furthermore, it could also be generalized to study thermodynamically-consistent hydrodynamic models, which will be pursued in our later research.

\section*{Acknowledgment}
Yuezheng Gong's work is partially supported by the Natural Science Foundation of Jiangsu Province (Grant No. BK20180413) and the National Natural Science Foundation of China (Grant No. 11801269).  Jia Zhao's work is partially supported by National Science Foundation under grant number NSF DMS-1816783.

\bibliographystyle{plain}

\end{document}